\makeatletter\@addtoreset{equation}{section} \makeatother
\newtheorem{theorem}[equation]{Theorem}
\newtheorem*{theorem*}{Theorem}
\newtheorem{proposition}[equation]{Proposition}
\newtheorem*{proposition*}{Proposition}
\newtheorem*{corollary*}{Corollary}
\newtheorem{conjecture}[equation]{Conjecture}
\theoremstyle{definition}
\theoremstyle{remark}
\newtheorem{remark}[equation]{Remark}
\newcommand{\symbitem}[1]{\item[#1]%
\renewcommand{\@currentlabel}{#1}\ignorespaces}
\def \O {\mathcal{O}}
\def \C {\mathbb{C}}
\def \P {\mathbb{P}}
\def \Q {\mathbb{Q}}
\def \Z {\mathbb{Z}}
\def \geq {\geqslant}
\def \leq {\leqslant}
\def \kappa {\varkappa}
\newcommand{\udot}{{\:\raisebox{3pt}{\text{\circle*{1.5}}}}}
\def \bullet {\udot}
\DeclareMathOperator{\rk}{rk}
\DeclareMathOperator{\codim}{codim}
\DeclareMathOperator{\Pic}{Pic}
\author{Sergey Galkin}
\begin{document}
\begin{abstract}
For Fano manifolds we define Ap\'ery constants and Ap\'ery class as particular limits of ratios of coefficients of solutions of the quantum differential equation. We do numerical computations in case of homogeneous varieties. These numbers are identified to be polynomials in the values $\zeta(k)$ of Riemann zeta-function with natural arguments $k$.
\end{abstract}
\maketitle



\setcounter{section}{0}

\section{Introduction}

The article is devoted to the computations of Ap\'ery numbers for the quantum differential equation
of homogeneous varieties, so first we introduce these $3$ notions.

Let $X$ be a Fano manifold of index $r$, that is $c_1(X) = r H$ for $H \in H^2(X,\Z)$,
and $q$ be a coordinate on the anti-canonical
torus $B := \Z c_1(X) \otimes \C^* = G_m \in \Pic(X) \otimes \C^*$,
and $D = q \frac{d}{dq}$ be an invariant vector field.
Cohomology $H^\bullet(X)$ are endowed with the structure of quantum multiplication $\star$,
and the first Dubrovin's connection on a trivial $H^\bullet(X)$-bundle over $B$ is given by
\begin{equation} \label{maineq}
D \phi = H \star \phi
\end{equation}


If we replace in equation \ref{maineq} quantum multiplication with the ordinary cup-product,
then its solutions are constant Lefschetz coprimitive (with respect to $H$) classes in $H^\bullet(X)$.
Dimension $\mu$ of the space of holomorphic solutions of \ref{maineq} is the same
and equal to the number of admissible
initial conditions (of the recursion on coefficients) modulo $q$,
i.e. the rank of the kernel of cup-multiplication by $H$ in
$H^\bullet(X)$,
that is the dimension of coprimitive Lefschetz cohomology.

Solving equation \ref{maineq} by Newton's method one obtains
 a matrix-valued few-step recursion reconstructing
all the holomorphic solutions from these initial conditions.

Givental's theorem states that the solution
$A = 1 + \sum_{n\geq 1} a^{(n)} q^n$ associated with the primitive class $1 \in H^0(X)$
is the $J$-series of the manifold $X$ (the generating function counting some rational curves of $X$).
Choose a basis of other solutions $A_1,\dots,A_{\mu-1}$ associated with homogeneous
primitive classes of nondecreasing codimension.
Put $A = \sum_{n\geq0} a^{(n)} t^n$ and $A_i = \sum_{n\geq0} a_i^{(n)} t^n$.
We call the number 
$$\lim_{n\to \infty} \frac{a_i^{(n)}}{a^{(n)}}$$
 {\it $i$-th Ap\'ery constant} after the renown work \cite{Apery},
where $\zeta(3)$ and $\zeta(2)$ were shown to be of that kind for some differential equations
 and such a presentation
was used for proving the irrationality of these two numbers.
If there is no chosen basis, for any coprimitive class $\gamma$ one still may consider
the solution $A_\gamma = \sum_{n\geq 1} a_\gamma^{(n)} q^n = 
Pr_0({\gamma + \sum_{n\geq1} A_\gamma^{(n)} q^n})$ and the limit 
\begin{equation}
Apery(\gamma) = \lim_{n\to \infty} \frac{a_\gamma^{(n)}}{a^{(n)}}
\end{equation}
Defined in that way, $Apery$ is a linear map from coprimitive cohomology to $\C$.
A linear map on coprimitive cohomology is dual
\footnote{One may choose between Poincare and Lefschetz dualities. We prefer the first one.}
to some (non-homogeneous) primitive cohomology class with coefficients in $\C$.
We name it {\it Ap\'ery characteristic class} $A(X) \in H^{\leq \dim X} (X,\C)$.

Consider the homogeneous ring $R = \Q[c_1,c_2,c_3,\dots], \deg c_i = i$
and a map $ev: R \to \C$ sending $c_1$ to Euler constant $C$
\footnote{$C = \lim_{n \to \infty} (\sum_{k=1}^n \frac{1}{k}) - \ln n$},
and $c_i$ to $\zeta(i)$.

The main conjecture we verify is the following
\begin{conjecture} \label{mainconj}
Let $X$ be any Fano manifold and $\gamma \in H^\bullet(X)$ be some coprimitive with respect to $-K_X$
homogeneous cohomology class of codimension $n$. Consider two solutions of
quantum $D$-module: $A_0$ associated with $1$ and $A_\gamma$ associated with $\gamma$.
Then Ap\'ery number for $A_\gamma$ (i.e. $\lim_{k\to \infty} \frac{a_\gamma^{(k)}}{a_0^{(k)}}$)
is equal to $ev(f_\gamma)$ for some homogeneous polynomial $f_\gamma  \in R^{(n)}$ of degree $n$.
\end{conjecture}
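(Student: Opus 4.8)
The plan is to identify the Ap\'ery characteristic class $A(X)$ with the \emph{Gamma class} $\hat\Gamma_X=\prod_i\Gamma(1+\delta_i)$, the product running over the Chern roots $\delta_i$ of $T_X$ (up to the normalisation already built into the definition of $A(X)$), and then to read off $f_\gamma$ from the Taylor expansion of $\log\Gamma$ at $1$. The first step is to control the asymptotics of the coefficients. The solutions $A_\gamma$ of \ref{maineq} holomorphic at $q=0$ are reconstructed by the few-step recursion from their $q^0$-terms, which range over the coprimitive cohomology (the kernel of cup-multiplication by $H$ in $H^\bullet(X)$), and the characteristic growth of that recursion is governed by the spectrum of the operator $L$ of quantum multiplication by $-K_X$. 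Assuming Conjecture~$\mathcal{O}$ for $X$---that $L$ has a simple, real, positive eigenvalue $T$ of maximal modulus---the scalar coefficients $a_\gamma^{(n)}$ of $Pr_0 A_\gamma$ all share the same exponential rate and the same leading algebraic factor: a steepest-descent analysis on the mirror side, equivalently singularity analysis at $q=1/T$, yields
\begin{equation}
a_\gamma^{(n)}=\sigma_n\,\bigl(\ell(\gamma)+O(1/n)\bigr),
\end{equation}
with a single scalar sequence $\sigma_n$ and a single linear functional $\ell$ on coprimitive cohomology, both independent of $\gamma$. Dividing, $\sigma_n$ cancels and $Apery(\gamma)=\ell(\gamma)/\ell(1)$, so $A(X)$ is the (primitive) cohomology class Poincar\'e-dual to $\ell/\ell(1)$.

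The second step is to identify $\ell$. By Givental's theorem $A_0$ is the $J$-series, and Gamma Conjecture~I of Galkin--Golyshev--Iritani asserts that its \emph{principal asymptotic class} is proportional to $\hat\Gamma_X$; together with the $\hat\Gamma$-integral structure on the space of flat sections---under which the vanishing cycle at the conifold point $q=1/T$ corresponds to $\mathcal{O}_X$, with Gamma-twisted Chern character $\hat\Gamma_X$---this forces $\ell$ to be the pairing against $\hat\Gamma_X$, normalised by the Euler self-pairing of $\mathcal{O}_X$ (a Todd-type integral, hence a rational number), i.e.\ $A(X)=\hat\Gamma_X$ up to a rational factor. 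Expanding via $\log\Gamma(1+x)=-Cx+\sum_{k\ge2}\tfrac{(-1)^k\zeta(k)}{k}x^k$,
\begin{equation}
\hat\Gamma_X=\exp\Bigl(-C\,c_1(X)+\sum_{k\ge2}\tfrac{(-1)^k\zeta(k)}{k}\,s_k(X)\Bigr),
\end{equation}
where $s_k(X)=\sum_i\delta_i^{\,k}$ is a $\Q$-polynomial in $c_1(X),\dots,c_k(X)$ by Newton's identities. The graded piece of $\hat\Gamma_X$ entering the pairing with a codimension-$n$ coprimitive class $\gamma$ has weighted degree $n$ (this is how the gradings are arranged), and its expression through the Chern classes of $X$ is weighted-homogeneous of degree $n$ with the constant $C$ attached to each occurrence of $c_1$ and $\zeta(k)$ to each occurrence of $c_k$ for $k\ge2$---which is exactly the datum of a polynomial $f_\gamma\in R^{(n)}$ together with the substitution $ev$.

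The main obstacle is that both structural inputs---Conjecture~$\mathcal{O}$ and Gamma Conjecture~I---are open for general Fano manifolds, so in that generality the argument is only conditional; and one still has to run the bookkeeping of the normalising genus, which is what guarantees the answer is a genuine polynomial in the $\zeta(k)$ rather than a quotient of such, and which uses that the conifold cycle is $\mathcal{O}_X$ and that Fano manifolds have $\chi(\mathcal{O}_X)=1$. For the rational homogeneous spaces $X=G/P$ treated here the situation is much better: Conjecture~$\mathcal{O}$ and Gamma Conjecture~I are known for Grassmannians and for several further families (by work of Galkin--Golyshev--Iritani, Cheong--Li, and others), and there the above becomes an unconditional derivation that accounts for the numerical tables. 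One residual point: when $\ell(\gamma)=0$---for instance for classes annihilated outright by $(-K_X)\star$---one must pass to the next term of the expansion, which involves the subdominant eigenvalues of $L$; by the same Gamma-integral-structure mechanism those corrections are again evaluations of zeta-polynomials, and the Lefschetz $\mathfrak{sl}_2$-decomposition of $H^\bullet(X)$ recalled above is precisely what cuts out the coprimitive initial data for which these asymptotics are valid.
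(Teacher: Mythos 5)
The statement you are trying to prove is a \emph{conjecture} in the paper: the author offers no proof, only numerical verification for homogeneous $G/P$'s plus the closing heuristic of Section~7 (the computation with commuting $M_0$, $M_1$ producing $\Gamma(1+M_0)$ in the limit), which is essentially an informal precursor of your Gamma-class argument. So there is no proof in the paper to match yours against, and the real question is whether your proposal actually closes the gap. It does not. First, it is openly conditional on Conjecture~$\mathcal{O}$ and Gamma Conjecture~I, both unproven for a general Fano manifold, so it cannot establish the statement in the stated generality; a conditional derivation of a conjecture from two other conjectures is an interesting reduction but not a proof. Second, and more seriously, even granting both inputs there is a gap at the central step: Gamma Conjecture~I controls the principal asymptotic class of the \emph{single} solution $A_0$ (the $J$-series), whereas your equation $a_\gamma^{(n)}=\sigma_n\,(\ell(\gamma)+O(1/n))$ is a uniform asymptotic for \emph{all} the holomorphic solutions $A_\gamma$, with one functional $\ell$ identified as pairing against $\hat\Gamma_X$. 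To get that you need to know that $q=1/T$ is the unique dominant singularity of every $A_\gamma$, that the local monodromy there is generated by a single vanishing cycle with the simplest possible exponents, and that the Gamma-integral structure identifies that cycle with $\mathcal{O}_X$ --- none of which follows from the two conjectures you name; it is an additional package of assumptions (in later literature this is roughly Gamma Conjecture~I \emph{plus} the statement that $T$ is a nondegenerate simple dominant singular point of the whole local system). The phrase ``steepest-descent analysis on the mirror side'' is doing all the work here and is not justified.

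Third, the degenerate case $\ell(\gamma)=0$ is not a marginal annoyance: the tables in the paper are full of vanishing Ap\'ery constants (e.g.\ the top primitive class of $Gr(2,2n)$, the $0_k$ entries for $D(5,3)$, $F(4,3)$, etc.), and in exactly those cases your leading-order asymptotic gives $0/0$-type information and the limit is governed by subdominant eigenvalues of $L$. Asserting that the corrections are ``again evaluations of zeta-polynomials by the same Gamma-integral-structure mechanism'' is assuming precisely the conclusion for those classes. Finally, a bookkeeping remark: the conjecture's codimension-$n$ class $\gamma$ is supposed to produce a \emph{homogeneous} degree-$n$ element of $R=\Q[c_1,c_2,\dots]$, and while the graded piece of $\hat\Gamma_X$ in degree $n$ does have this form, your normalisation by $\ell(1)$ (a full, non-homogeneous zeta-polynomial unless it is rational) threatens to destroy homogeneity; you gesture at $\chi(\mathcal{O}_X)=1$ but do not actually verify that the normalising constant is rational. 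In short: a reasonable and historically vindicated \emph{strategy}, consistent with the paper's own speculation, but with at least three substantive holes, and in no case a proof of the conjecture as stated for arbitrary Fano manifolds.
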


Actually, in our case there is no Euler constant contributions,
and the conjecture seems too strong to be true - it would imply that
some of differential equations studied in \cite{ASZ} has non-geometric origin
(at least come not from quantum cohomology), because their Ap\'ery numbers does not seem to be
of the kind described in the conjecture (e.g. Catalan's constant, $\pi^3$, $\pi^3 \sqrt{3}$).

From the other point of view, for toric varieties $X$ the solutions of quantum differential equations are known to be
pull-backs of hyper-geometric functions, coefficients of hyper-geometric functions
are rational functions of $\Gamma$-values, and the Taylor expansion
\begin{equation} \label{gammazeta}
\log \Gamma(1-x) = C x + \sum_{k\geq 2} \frac{\zeta(k)}{k} x^k
\end{equation}
suggests that all Ap\'ery constants would probably be rational functions in $C$ and $\zeta(k)$.
So the main conjecture \ref{mainconj} is at least as plausible as toric degeneration conjecture or hyper-geometric pull-back conjecture.
Also Ap\'ery limits like $\frac{91}{432} \zeta(3) - \frac{1}{216} \pi^3 \sqrt{3}$ may
appear as ``square roots'' or factors (convolutions with quadratic character)
 of geometric limits like $\frac{91^2}{432^2} \zeta(3)^2 - \frac{3}{216^2} \pi^6$.

This is not even the second paper (the computations of this paper were described to author by Golyshev in 2006) discussing the natural appearance of $\zeta$-values
in monodromy of quantum differential equations. In case of fourfolds $X$ the expression of monodromy
in terms of $\zeta(3), \zeta(2k)$ and characteristic numbers of anti-canonical section of $X$
was given by van Straten \cite{Straten},
$\Gamma$-class for toric varieties appears in Iritani's work \cite{Iritani},
and in general context in \cite{KKP}.

Let $G$ be a (semi)simple Lie group, 
$W$ be its Weyl group, 
$P$ be a (maximal) parabolic subgroup associated with
the subset (or just one) of the simple roots of Dynkin diagram,
 and denote factor $G/P$  by $X$.
$X$ is a homogeneous Fano manifold with $\rk \Pic X$ equal to the number
of chosen roots. In case when $G$ is simple and $P$ is maximal we have $\Pic X = \Z H$,
where $H$ is an ample generator, $K_X = -r H$.

For homogeneous varieties with small number of roots in Dynkin diagram (being more precise, with
not too big total dimension of cohomology)
by the virtue of Peterson's version of Quantum Chevalley formula \cite{FW04}[Theorem 10.1]
we explicitly compute the operator $H \star$
\footnote{We used computer algebra software LiE \cite{Lie} for the computations in Weyl groups.
The script is available at
\url{http://www.mi.ras.ru/~galkin/work/qch.lie},
and the answer is available in \cite{GG2}. We used PARI/GP computer algebra software \cite{PARI2}
for solving the recursion and finding the linear dependencies between the answers and zeta-polynomials.
Script for this routine is available at
\url{http://www.mi.ras.ru/~galkin/work/apery.gp}.
},
and hence find \ref{maineq}
with all its holomorphic solutions.
Then we do a numerical computation of the ratios 
$\frac{a_\gamma^{(k)}}{a_0^{(k)}}$ for big $k$ (e.g. $k=20$ or $40$ or $100$),
and guess the values of the corresponding Ap\'ery constants,
then state some conjectures (refining \ref{mainconj}) on what these numbers should be.

\section{Grassmannian Gr(2,N)}

Let $V$ be the tautological bundle on Grassmannian $Gr(2,N)$,
consider $H = c_1(V)$ and $c_2 = c_2(V)$. Cohomology $H^\bullet(Gr(2,N),\C)$
is a ring generated by $H$ and $c_2$ with relations of degree $\geq N-1$.
So there is at least $1$ primitive (with respect to $H$) Lefschetz cohomology class $p_{2k}$
in every even codimension $2k$, $0 \leq k \leq \frac{N-2}{2}$. Since
$$\dim H^\bullet(Gr(2,N),\C) = \binom{N}{2} = \sum_{k=0}^\frac{N-2}{2} (2N-3-4k)$$
they exhaust all the primitive classes.

\begin{gather*}
p_0 = 1 \\
p_2 = c_2 - \frac{c_2 \cdot c_1^{2N-6}}{c_1^{2N-4}} c_1^2 \\
\dots 
\end{gather*}

The associated conjectural Ap\'ery numbers are listed in the following table,
Ap\'ery numbers associated with the primitive cohomology classes of codimension $2k$
are rational multiples of $\zeta(2k) \simeq_{\Q^*} \pi^{2k}$.

\smallskip

\begin{center}
\begin{longtable}{|l|c|c|c|c|c|}
\hline
$X$ & $\mu$ & $p_2$ & $p_4$ & $p_6$ & $p_8$ \\
\hline \endhead
$Gr(2,4)$ & $2$ & $0$ & & & 			\\
$Gr(2,5)$ & $2$ & $\zeta(2)$ & & &		\\
\hline
$Gr(2,6)$ & $3$ & $2 \zeta(2)$ & $0$ & &		\\
$Gr(2,7)$ & $3$ & $3 \zeta(2)$ & $\frac{27}{4} \zeta(4)$	& & \\
\hline
$Gr(2,8)$ & $4$ & $4 \zeta(2)$ & $16 \zeta(4)$ & $0$ &			\\
$Gr(2,9)$ & $4$ & $5 \zeta(2)$ & $\frac{111}{4} \zeta(4)$ & $\frac{675}{16} \zeta(6)$ &		\\
\hline
$Gr(2,10)$ & $5$ & $6 \zeta(2)$ & $42 \zeta(4)$ & $108 \zeta(6)$	& $0$		\\
$Gr(2,11)$ & $5$ & $7 \zeta(2)$ & $\frac{235}{4} \zeta(4)$ & 
	$\frac{3229}{16} \zeta(6)$ & $\frac{18375}{64} \zeta(8)$ \\
\hline
$Gr(2,12)$ & $6$ & $8 \zeta(2)$ & $78 \zeta(4)$ & $328 \zeta(6)$ & $768 \zeta(8)$,
\\
$Gr(2,13)$ & $6$ & $9 \zeta(2)$ & $\frac{399}{4} \zeta(4)$ & $\frac{7855}{16} \zeta(6)$ &
 $\frac{96111}{64} \zeta(8)$,
 \\
\hline
$Gr(2,14)$ & $7$ & $10 \zeta(2)$ & $124 \zeta(4)$ & $695 \zeta(6)$ & $\frac{7664}{3} \zeta(8)$,
\\
$Gr(2,15)$ & $7$ & $11 \zeta(2)$ & $\frac{603}{4} \zeta(4)$ & $\frac{15113}{16} \zeta(6)$ &
$\frac{768085}{192} \zeta(8)$,
\\
\hline
\end{longtable}
\end{center}

\begin{remark}
$Gr(2,5)$ case is essentially Ap'ery's recursion for $\zeta(2)$ (see remark \ref{ql1}).
\end{remark}

\begin{remark}
Constants for $p_2$ depend linearly on $N$,
constants for $p_4$ depend quadratically on $N$,
constants for $p_6$ looks like they grow cubically in $N$.
So we conjecture constants for $p_{2k}$ is $\zeta(2k)$ times
polynomial of degree $k$ of $N$.
\end{remark}

The proof for the computation of $p_2$ (in slightly another $\Q$-basis) was given recently in \cite{GoZ}. 
Let us describe a transparent generalization of this method for the all primitive $p_{2k}$ of $Gr(2,N)$.
Quantum $D$-module for $Gr(r,N)$ is the $r$'th wedge power of quantum $D$-module for $\P^{N-1}$
(solutions of quantum differential equation for $Gr(r,N)$ are $r \times r$ Wronskians of the fundamental matrix 
of solutions for $\P^{N-1}$).
 Let $N$ be either $2n$ or $2n+1$. 
Consider the deformation
of quantum differential equation for $\P^{N-1}$:
\begin{equation} \label{defeq}
 (D-u_1)(D+u_1)(D-u_2)(D+u_2)\cdot \dots \cdot (D-u_n)(D+u_n) \cdot D^{N-2n} - q 
\end{equation}
This equation has (at least) $2n$ formal solutions:
$$ R_a =\sum_{k-a \in \Z_+}  \frac{1}{\Gamma(k-u_1)\Gamma(k+u_1)\cdot \dots \cdot \Gamma(k-u_n)\Gamma(k+u_n) 
\cdot \Gamma(k)^{N-2n}} q^k $$
for $a=u_1, -u_1, \dots, u_n, -u_n$.
Let $S_i = R_{u_i}' R_{-u_i} -  R_{-u_i}' R_{u_i}$ be the Wronskians.
Then $S_i = \sum_{k\geq0} s_i^{(k)} q^k$ for $i=1,\dots,n$
 are $n$ holomorphic solutions of the wedge square of the deformed equation \ref{defeq}.
Using his explicit calculation for the monodromy of hyper-geometric equation \ref{defeq}
and Dubrovin's theory, Golyshev computes the monodromy of $\wedge^2(\ref{defeq})$
and demonstrates {\it the sine formula}:
\begin{equation} \label{sinsin}
\lim_{k\to\infty} \frac{s_i^{(k)}}{s_j^{(k)}} = \frac{\sin (2 \pi u_i)}{\sin (2 \pi u_j)}
\end{equation}
So in the base of $S_1,\dots,S_n$ Ap\'ery numbers are $\frac{\sin (2 \pi u_i)}{\sin (2 \pi u_1)}$.
One then reconstructs the required Ap\'ery numbers by applying the inverse fundamental solutions
matrix to this sine vector, and limiting all $u_i$ to $0$.

\section{Other Grassmannians of type A}

Let $V$ be the tautological bundle on Grassmannian $Gr(3,N)$,
consider $H = c_1(V)$, $c_2 = c_2(V)$ and $c_3 = c_3(V)$.

Cohomology $H^\bullet(Gr(3,N),\C)$
are generated by $H$, $c_2$ and $c_3$ with relations of degree $\geq N-2$. In particular,
if $N>7$, then $1$, $c_2$, $c_3$, $c_2^2$ and $c_2 c_3$ generate
$H^{\leq 10}(X,\Q) = H^\bullet(X)/H^{>10}(X)$ as $\Q[c_1]$-module.
So there is $1$ primitive class in codimensions $0$,$2$,$3$,$4$ and $5$.

\begin{longtable}{|l|c|c|c|c|c|c|c|c|c|c|c|c|c|p{5cm}|}
\hline
$X$ & $\mu$ & $p_2$ & $p_3$ & $p_4$ & $p_5$ & $p_{\geq 6}$ \\
\hline \endhead
$Gr(3,6)$ &   $3$ & $0$ & $-6 \zeta(3)$ & & &							\\
$Gr(3,7)$ &   $4$ & $\zeta(2)$ & $-7 \zeta(3)$ & $-\frac{17}{4} \zeta(4)$ &  & 
$-\frac{49}{2} \zeta(3)^2 - \frac{945}{16} \zeta(6)$	\\
\hline
$Gr(3,8)$ &   $5$ & $2 \zeta(2)$ & $-8 \zeta(3)$ & $0$ & $-8\zeta(2)\zeta(3)-4\zeta(5)$ &
$-32 \zeta(3)^2 - 62 \zeta(6)$ 		\\
$Gr(3,9)$ &   $8$ & $3 \zeta(2)$ & $-9 \zeta(3)$ & $\frac{27}{4} \zeta(4)$ & 
$-\frac{27}{2} \zeta(2)\zeta(3) -\frac{9}{2} \zeta(5)$ & 
$\pm (\frac{81}{2} \zeta(3)^2 +\frac{871}{16} \zeta(6))$,
\dots
\\
$Gr(3,10)$ & $10$ & $4 \zeta(2)$ & $-10 \zeta(3)$ & $16 \zeta(4)$ & $-20 \zeta(2)\zeta(3) -5 \zeta(5)$ &

$\pm (50 \zeta(3)^2 + 32 \zeta(6))$,
\dots
		\\
$Gr(3,11)$ & $13$ & $5 \zeta(2)$ & $-11 \zeta(3)$ & $\frac{111}{4} \zeta(4)$ & 
$-\frac{55}{2} \zeta(2)\zeta(3) -\frac{11}{2} \zeta(5)$	&
$ (-\frac{121}{2} \zeta(3)^2 + \frac{110}{16} \zeta(6)) \pm \frac{45}{16} \zeta(6)$,
\dots
\\
\hline
\end{longtable}

\begin{remark}
One may notice that the Ap\'ery constants of $p_2$ and $p_4$ for $Gr(3,N)$
 are equal to the Ap\'ery constants of $p_2$, $p_4$ for $Gr(2,N-2)$.
Why? Is it possible to make an analogous statement for $p_6$ (obviously one should choose
another basis of two elements in $H^{12}(Gr(3,N))$ to vanish appearing $\zeta(3)^2$ terms)?
\end{remark}

\begin{remark}
$p_2$ is linear of $N$, $p_4$ is quadratic of $N$,
$p_3$ is linear of $N$, $p_5$ is quadratic of $N$.
\end{remark}

\begin{remark}
$p_5$ is quadratic polynomial of $N$ times $\zeta(2)\zeta(3)$
plus linear polynomial of $N$ times $\zeta(5)$.
Actually it is $-\frac{p_2 p_3 - N \zeta(5)}{2}$.
This gives a suggestion on a method of separating e.g. $\zeta(4)$
and $\zeta(2)^2$ in $p_4$ --- $\zeta(4)$ term should be only linear
and $\zeta(2)^2$ is quadratic in $N$. 
Similarly the coefficient at $\zeta(3)^2$ is quadratic in $N$
(and in the chosen basis $p_6$'th $\zeta(3)^2$-part is $\frac{p_3^2}{2}$).
\end{remark}

For $Gr(4,N)$ we still do have a unique primitive class of codimension $5$.

\begin{longtable}{|l|c|c|c|c|c|c|p{5cm}|}
\hline
$X$ & $\mu$ & $p_2$ & $p_3$ & $p_4$ & $p_4'$ & $p_5$ & $p_{\geq 6}$  \\
\hline \endhead
$Gr(4,8)$ & $8$ & $0$ & $-8 \zeta(3)$ & $-6 \zeta(4)$ & $0$ & none &
$32 \zeta(3)^2 + 50 \zeta(6)$ twice and $0_8$ \\
$Gr(4,9)$ & $12$ & $\zeta(2)$ & $-9 \zeta(3)$ & $\frac{21}{4} \zeta(4)$ & $\zeta(4)$ &
$-\frac{9}{2} (\zeta(2)\zeta(3) + \zeta(5))$ & 
$(\frac{81}{2} \zeta(3)^2 +\frac{117}{4} \zeta(6)) \pm \frac{159}{16} \zeta(6)$,
\dots
	\\	
$Gr(4,10)$ & $18$ & $2 \zeta(2)$ & $-10 \zeta(3)$ & $-2 \zeta(4)$ & $2 \zeta(4)$ & 
$-10 \zeta(2)\zeta(3) -5 \zeta(5)$ &
$50 \zeta(3)^2 + 31 \zeta(6)$,
$50 \zeta(3)^2$, $0_6$, \dots
	\\	
$Gr(4,11)$ & $24$ & $3 \zeta(2)$ & $-11 \zeta(3)$ & $\frac{15}{4} \zeta(4)$ & $3 \zeta(4)$ &
 $-\frac{33}{2} \zeta(2)\zeta(3) - \frac{11}{2} \zeta(5)$ &
$(\frac{121}{2} \zeta(3)^2 + \frac{35}{2} \zeta(6)) \pm \frac{197}{16} \zeta(6)$,
$\frac{27}{16} \zeta(6)$,\dots
\\
\hline
\end{longtable}

\begin{remark}
Ap\'ery of $p_3$ for $Gr(3,N)$ and $Gr(4,N)$ coincide.
Ap\'ery of $p_2$ for $Gr(4,N)$ is equal to Ap\'ery of $p_2$ for $Gr(3,N-2)$ and
Ap\'ery of $p_2$ for $Gr(2,N-4)$.
\end{remark}

For $Gr(5,10)$ we have $20$ Lefschetz blocks, they correspond to $20$ solutions,
and hence $19$ Ap\'ery constants. Some of them vanish, while some other coincide
(because solutions differ only by some character). 

\begin{longtable}{|l|c|c|c|c|c|c|c|c|}
\hline
$X$ & $\mu$ & $p_2$ & $p_3$ & $p_4$ & $p_4'$ & $p_5$ & $p_5'$ \\
\hline
$Gr(5,10)$ & $20$ & $0$ & $-10 \zeta(3)$ & $-6 \zeta(4)$ & $0$ & $10 \zeta(5)$ & $-10 \zeta(5)$ \\
$Gr(5,11)$ & $32$ & $\zeta(2)$ & $-11 \zeta(3)$ & $-\frac{21}{4} \zeta(4)$ &
 $\zeta(4)$ & $11 (\zeta(5)-\zeta(2)\zeta(3))$ & $-11 \zeta(5)$ \\
\hline
\end{longtable}

\section{B,C,D cases}

The picture for other $3$ series of classical groups is similar.

For $1\leq k\leq n$ let $D(n,k)$ denote homogeneous space of isotropic (with respect to
non-degenerate quadratic form) $k$-dimensional
linear spaces in $2n$-dimensional vector space. $D(n,k) = OGr(k,2n) = G/P$ where $G$ is $Spin(2n)$,
and maximal parabolic subgroup $P \subset G$ corresponds to $k$'th simple root counting from left to right.
Similarly define $B(n,k) = OGr(k,2n+1)$ and $C(n,k) = SGr(k,2n)$.

\begin{longtable}{|l|c|p{15cm}|}
\hline
$X$ & $\mu$ & Ap\'ery numbers \\ 
\hline \endhead
$B(3,2)$ & $2$ & $-2 \zeta(2)$.					\\
$B(4,2)$ & $3$ & $\zeta(2)$, $-\frac{41}{2} \zeta(4)$. 		\\
$B(4,3)$ & $3$ & $-4 \zeta(2)$, $-4 \zeta(3)$. 			\\
$B(4,4)$ & $2$ & $2 \zeta(3)$.					\\
\hline
$B(5,2)$ & $4$ & $3\zeta(2)$, $\frac{3}{2}\zeta(4)$ $-\frac{1191}{8}\zeta(6)$.  \\
$B(5,3)$ & $8$ &
$0_2$, 
$-8\zeta(3)$, 
$-24\zeta(4)$, 
$ 20\zeta(5)$, 
$\frac{64}{3}\zeta(3)^2 + \frac{80}{3}\zeta(6)$,
$ 32\zeta(3)\zeta(4) + \frac{232}{3}\zeta(7)$,
$ \frac{256}{21}\zeta(3)^3 + \frac{320}{7}\zeta(3)\zeta(6) -
 \frac{480}{7}\zeta(4)\zeta(5) -\frac{1000}{21}\zeta(9)$.  
 \\
$B(5,4)$ & $8$ &
$-6\zeta(2)$, 
$-6\zeta(3)$, 
$-45\zeta(4)$, 
$ 9\zeta(2)\zeta(3) + 21\zeta(5)$, 
$ 15\zeta(3)^2 + \frac{1141}{24}\zeta(6)$, 
$ 56\zeta(2)\zeta(5) + 30\zeta(3)\zeta(4) + 52\zeta(7)$, 
$ \frac{266}{5}\zeta(3)^3 -\frac{171}{5}\zeta(2)\zeta(7) 
-\frac{222}{5}\zeta(3)\zeta(6) - \frac{263}{5}\zeta(4)\zeta(5) + \frac{136}{5}\zeta(9)$. 
 \\
$B(5,5)$ & $3$ &  $4 \zeta(3)$, $20 \zeta(5)$.			\\
\hline
$B(6,2)$ & $5$ & $5\zeta(2)$, $\frac{87}{4}\zeta(4)$,
 $-\frac{485}{8}\zeta(6)$, $-\frac{35073}{32}\zeta(8)$.  \\
$B(6,3)$ & $12$ & 
$2\zeta(2)$, $-6\zeta(3)$, $-12\zeta(4)$,
$-12\zeta(2)\zeta(3) + 18\zeta(5)$, 
$-36\zeta(3)^2 - 146\zeta(6)$, 
$ 36\zeta(3)^2 + 2\zeta(6)$, 
$ 24\zeta(2)\zeta(5) + 24\zeta(3)\zeta(4) + 76\zeta(7)$, 
$ \frac{360\zeta(3)^2\zeta(2) -1080\zeta(3)\zeta(5) + 1176\zeta(8)}{11}$, 
$ 803\zeta(3)^3 -528\zeta(2)\zeta(7) + 318\zeta(3)\zeta(6) -244\zeta(4)\zeta(5) -35\zeta(9)$, 
$ 75\zeta(3)^3 -336\zeta(2)\zeta(7) -395\zeta(3)\zeta(6) -22\zeta(4)\zeta(5) -70\zeta(9)$,\dots 
\\
$B(6,4)$ & $18$ & 
$-1 \zeta(2)$,
$-10 \zeta(3)$,
$-\frac{17}{4} \zeta(4)$,
$-14 \zeta(4)$,
$5 \zeta(2)\zeta(3)+ 19 \zeta(5)$,
$50 \zeta(3)^2+ 317 \zeta(6)$,
$-50 \zeta(3)^2- \frac{4135}{8} \zeta(6)$,
\\
$B(6,5)$ & $14$ & 
$-8 \zeta(2)$,
$-8 \zeta(3)$,
$-84 \zeta(4)$,
$64 \zeta(2)\zeta(3) + 16 \zeta(5)$,
$-64 \zeta(2)\zeta(3)$,
$\frac{80}{3} \zeta(3)^2 + 24 \zeta(6)$,
$110 \zeta(2)\zeta(5) + \frac{49}{2} \zeta(3)\zeta(4) + \frac{101}{2} \zeta(7)$,

\\
$B(6,6)$ & $5$ & $6 \zeta(3)$, $18 \zeta(5)$, $-18 \zeta(3)^2 - 60 \zeta(6)$, 
$36 \zeta(3)^3 +  360 \zeta(3)\zeta(6) + 332 \zeta(9)$
 \\
\hline
$B(7,2)$ & $6$ & $7 \zeta(2)$, $\frac{211}{4} \zeta(4)$, $\frac{1733}{8} \zeta(6)$,
$-\frac{76699}{96} \zeta(8)$,
$-\frac{5368203}{640} \zeta(10)$.
\\
$B(7,7)$ & $8$ & $8 \zeta(3)$, $16 \zeta(5)$, $-30 \zeta(3)^2 - 60\zeta(6)$, $-112 \zeta(7)$,
$\frac{256}{3} \zeta(3)^3 + 480 \zeta(3)\zeta(6) + \frac{992}{3}\zeta(9)$, \dots   \\
\hline
\end{longtable}

\begin{remark}
$B(4,4)$ case is essentially Ap\'ery's recursion for $\zeta(3)$.
\end{remark}

\begin{longtable}{|l|c|p{15cm}|}
\hline
$X$ & $\mu$ & Ap\'ery numbers  \\ 
\hline \endhead
$C(3,2)$ & $2$ & $2 \zeta(2)$.						\\
$C(3,3)$ & $2$ & $\frac{7}{2} \zeta(3)$.				\\
$C(4,2)$ & $3$ & $4 \zeta(2)$, $16 \zeta(4)$.				\\
$C(4,3)$ & $4$ & $\zeta(2)$, $-9 \zeta(3)$, 
			$-\frac{9}{2} (\zeta(2)\zeta(3) + \zeta(5))$.	\\
$C(4,4)$ & $2$ & $4 \zeta(3)$.						\\
\hline
$C(5,2)$ & $4$ & $6 \zeta(2)$, $42 \zeta(4)$, $108 \zeta(6)$.		\\
$C(5,3)$ & $8$ &
$3\zeta(2)$,
$-11\zeta(3)$,
$\frac{27}{4}\zeta(4)$,
$-\frac{33}{2}\zeta(2)\zeta(3) - \frac{11}{2}\zeta(5)$,
$\frac{242}{3}\zeta(3)^2 + \frac{2383}{48}\zeta(6)$,
$-11\zeta(2)\zeta(5) -\frac{99}{4}\zeta(3)\zeta(4) - \frac{11}{3}\zeta(7)$,
$108\zeta(3)^3 -38\zeta(2)\zeta(7) + \frac{309}{4}\zeta(3)\zeta(6) - \frac{41}{4}\zeta(4)\zeta(5) + 36\zeta(9) $
						\\
$C(5,4)$ & $8$ & 
$0_2$,
$-10\zeta(3)$,
$30\zeta(4)$,
$-5\zeta(5)$,
$\frac{250}{3}\zeta(3)^2 + \frac{175}{3}\zeta(6)$,
$-\frac{100}{3}\zeta(3)\zeta(4) - \frac{10}{9}\zeta(7)$,
$\frac{2500}{21}\zeta(3)^3 + 250\zeta(3)\zeta(6) - \frac{150}{7}\zeta(4)\zeta(5) - \frac{10}{21}\zeta(9)$.
				\\
$C(5,5)$ & $3$ & $\frac{9}{2} \zeta(3)$ ,  $-\frac{21}{2} \zeta(5)$.	\\
\hline
$C(6,2)$ & $5$ & $8 \zeta(2)$, $78 \zeta(4)$, $328 \zeta(6)$, $768 \zeta(8)$.  \\
$C(6,3)$ & $12$ &
$5\zeta(2)$,
$-13\zeta(3)$,
$\frac{111}{4}\zeta(4)$,
$-\frac{65}{2}\zeta(2)\zeta(3) - \frac{13}{2}\zeta(5)$,
$-\frac{169}{2}\zeta(3)^2 + \frac{155}{16}\zeta(6)$,
$\frac{169}{2}\zeta(3)^2 + \frac{65}{2}\zeta(6)$,
 							\\
$C(6,6)$ & $4$ &
$\zeta(3)$,
$-11 \zeta(5)$,
$-25 \zeta(3)^2 - \frac{15}{2} \zeta(6)$,
$\frac{500}{3} \zeta(3)^3 + 150 \zeta(3)\zeta(6) - \frac{131}{3}\zeta(9)$.
 							\\
\hline
$C(7,2)$ & $6$ & $10 \zeta(2)$, $124 \zeta(4)$, $695 \zeta(6)$,
 $\frac{7664}{3} \zeta(8)$, $5760 \zeta(10)$.  				\\
$C(7,7)$ & $8$ & $\frac{11}{2}\zeta(3)$, $-\frac{23}{2}\zeta(5)$, 
$-\frac{121}{4}\zeta(3)^2 -\frac{15}{2}\zeta(6)$, 
$\frac{71}{2}\zeta(7)$, 
$\frac{1331}{6}\zeta(3)^3 + 165\zeta(3)\zeta(6) -\frac{263}{6}\zeta(9)$, 
$\frac{781}{12}\zeta(3)\zeta(7) - \frac{529}{12}\zeta(5)^2 - \frac{63}{2}\zeta(10)$,\dots 
 							\\
\hline
\end{longtable}

\begin{remark}
One may notice that Ap\'ery numbers for $C(2,n)=SGr(2,2n)$ coincide with Ap\'ery numbers
of $Gr(2,2n)$ except the last $0$. The reason for this coincidence is that $SGr(2,2n)$
is a hyperplane section of $Gr(2,2n)$, so by quantum Lefschetz (see \ref{ql1})
it has almost the same Ap\'ery numbers.
\end{remark}

\begin{remark}
For general $k$ spaces $OGr(k,N)$ and $SGr(k,N)$ are sections of ample vector bundles
over $Gr(k,N)$ (symmetric and wedge square of tautological bundle). Is it possible
to formulate a generalization of quantum Lefschetz principle explaining the relations
between Ap\'ery numbers of $OGr(k,N)$, $SGr(k,N)$ and $Gr(k,N)$?
\end{remark}

\begin{longtable}{|l|c|p{15cm}|}
\hline
$X$ & $\mu$ & Ap\'ery numbers  \\
\hline \endhead
$D(4,2)$ & $4$ &  $0, 0, -24 \zeta(4)$.				\\
\hline
$D(5,2)$ & $5$ & $2 \zeta(2)$, $0$, $-12 \zeta(4)$, $-144 \zeta(6)$. \\

$D(5,3)$ & $9$ & $- \zeta(2)$, $-\zeta(2)$, $-6 \zeta(3)$, $0_4$, $-\frac{45}{2} \zeta(4)$,
$3 \zeta(2)\zeta(3)+ 21 \zeta(5)$, $0_5$, $12 \zeta(3)^2 + \frac{275}{24} \zeta(6)$.
  						\\
$D(5,4)$ & $2$ & $2 \zeta(3)$.					\\
\hline
$D(6,2)$ & $6$ & $4 \zeta(2)$, $10 \zeta(4)$, $10 \zeta(4)$, $-124 \zeta(6)$, $-960 \zeta(8)$. \\
$D(6,3)$ & $14$ &
$ \zeta(2)$,
$-5 \zeta(3)$,
$-5 \zeta(3)$,
$-\frac{41}{2} \zeta(4)$,
$0$,
$-5 \zeta(2)\zeta(3)+ 19 \zeta(5)$,
$\frac{25}{2} \zeta(3)^2 + \frac{953}{16} \zeta(6)$,
$\frac{25}{2} \zeta(3)^2 - \frac{937}{16} \zeta(6)$,
$0$,
 							\\
$D(6,5)$ & $3$ & $4 \zeta(3)$, $20 \zeta(5)$.			\\		
\hline
$D(7,2)$ & $7$ & $6 \zeta(2)$, $36 \zeta(4)$, $0$, $50 \zeta(6)$, $-1072 \zeta(8)$, $-6912 \zeta(10)$.  \\
$D(7,6)$ & $5$ & $6 \zeta(3)$, $18 \zeta(5)$,
$-18 \zeta(3)^2 - 60 \zeta(6)$, 
$36\zeta(3)^3 + 360\zeta(3)\zeta(6) + 332\zeta(9)$.  			\\
\hline
\end{longtable}

\begin{remark}
$D(N,N-1)$ is isomorphic to $B(N-1,N-1)$, so in the case $D(6,5)$
we again have Ap\'ery's recurrence for $\zeta(3)$ here.
\end{remark}

\section{Exceptional cases - $E$, $F$, $G$}
We provide computations of Ap\'ery constants
only for a few of $23$ exceptional homogeneous varieties, those with not too big spaces
of cohomology.

\begin{longtable}{|l|c|p{15cm}|}
\hline
$X$ & $\mu$ & Ap\'ery numbers  \\
\hline
$E(6,6)$ & $3$ & $6 \zeta(4)$, $0_8$.			\\
$E(6,2)$ & $6$ & $0_3$, $18\zeta(4)$, $90\zeta(6)$, $0_7$, $-3456 \zeta(10)$. \\

$E(7,7)$ & $3$ & $-24 \zeta(5)$, $168 \zeta(9)$.	\\
$E(8,8)$ & $11$ & $120 \zeta(6)$, $-1512 \zeta(10)$,
 \dots (of degrees $12$, $16$, $18$, $22$, $28$). 		\\
\hline
$F(4,1)$ & $2$ & $21 \zeta(4)$.				\\
$F(4,3)$ & $8$ & $-4\zeta(2)$, $0_3$, $-2\zeta(4)$, $-24\zeta(5)$, 
$-246\zeta(6)$, $ 32\zeta(2)\zeta(5) + 60\zeta(7)$, 
$ 2160\zeta(2)\zeta(7) - 144\zeta(4)\zeta(5)$. \\ 
$F(4,4)$ & $2$ & $6 \zeta(4)$.				\\
\hline
\end{longtable}


\begin{remark}
There are two roots in the root system of $G_2$, taking factor by the parabolic subgroup
associated with the smaller one we get a projective space, so later by $G_2/P$ we
denote the $5$-dimensional factor by another maximal parabolic subgroup.
There is no literal Ap\'ery constants for $G_2/P$ since this variety is minimal,
so the only primitive cohomology class is $1$,
altough one may seek for almost solutions of quantum differential equation
(strictly speaking Ap\'ery himself also considered such solutions).
In \cite{GoZ} Golyshev considers this problem for Fano threefold $V_{18}$
(i.e. a section of $G_2/P$ by two hyperplanes) and using Beukers argument \cite{Beukers87}
and modularity of the quantum $D$-module for $V_{18}$ shows that Ap\'ery number is equal to
$L_{\sqrt{-3}}(3)$
\end{remark}

\section{Varieties with greater rank of Picard group, non-Calabi-Yau and Euler constant}
\label{euler}

One may consider the same question for varieties $X$ with higher Picard group.
Canonically we should put $H = -K_X$, but if we like, we could choose any $H \in \Pic(X)$.

Even for such simple spaces as products of projective spaces one immediately
calculates some non-trivial Ap\'ery constants.

\begin{longtable}{|l|c|c|}
\hline
$X$ & $\mu$ & Ap\'ery numbers  \\
\hline
$\P^2 \times \P^2$ & $3$ & $0_1$, $6 \zeta(2)$.                   \\
$\P^2 \times \P^3$ & $3$ & $0_1$, $\frac{14}{3} \zeta(2)$. \\
\hline
\end{longtable}

In all these cases Ap\'ery numbers corresponding to all primitive divisors vanish.
Van Straten's calculation \cite{Straten} relates monodromy of quantum differential equation for Fano fourfold $X$
not to Chern numbers of the Fano, but to Chern numbers of its anti-canonical Calabi-Yau
hyperplane section $Y$. Probably $C$-factors should correspond to $c_1$-factors
in the Chern number, and since for Calabi-Yau $c_1(Y)=0$ we observe Euler constant is
not involved. So one should consider something non-anti-canonical.

Let's test the case $H=\O(1,1)$ on $\P^2 \times \P^3$.
Being exact, we restrict $D$-module to sub-torus corresponding to $H$,
and consider operator of quantum multiplication by $H$ on it (sub-torus
associated with $H$ is invariant with respect to vector field associated with $H$).

\begin{longtable}{|l|c|c|}
\hline
$(X,H)$ & $\mu$ & Ap\'ery numbers  \\
\hline
$(\P^2 \times \P^3,\O(1,1))$ & $3$ & $-C$, $\frac{C^2 + 7 \zeta(2)}{2}$. \\
\hline
\end{longtable}

\section{Irrationality, special varieties and further speculations}
First of all let us note that both differential equations considered by Ap\'ery for the
proofs of irrationality of $\zeta(2)$ and $\zeta(3)$ are essentially appeared in our
computations as quantum differential equations of homogeneous varieties $Gr(2,5)$ and
$OGr(5,10) = D(5,4)$ (and isomorphic $OGr(4,9)= B(4,4)$).
By essentially we mean the following proposition --- Ap\'ery constants are invariant with respect
to taking hyperplane section if the corresponding primitive classes survive:
\begin{proposition} \label{ql1}
Let $X$ be a sub-canonically embedded smooth Fano variety\footnote{One may state this proposition
in higher generality, but we are going to use it for homogeneous spaces, and as stated it will
be enough.} of index $r>1$ i.e. $X$
is embedded to the projective space by a linear system $|H|$,
and $-K_X = rH$.
Consider a general hyperplane section $Y$ --- a sub-canonically embedded smooth Fano variety
of index $r-1$. There is a restriction map $\gamma \to \gamma \cap H$ from cohomology
of $X$ to cohomology of $Y$ and by Hard Lefschetz theorem except possible of intermediate codimension
all primitive classes of $Y$ are restricted primitive classes of $X$. Consider a homogeneous
primitive class of non-intermediate codimension $\gamma \in H^\bullet(X)$. Then Ap\'ery numbers
for $\gamma$ calculated from quantum differential equations of $X$ and $Y$ coincide.
\end{proposition}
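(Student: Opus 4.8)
The plan is to compare the quantum $D$-modules of $X$ and $Y$ directly via the quantum Lefschetz hyperplane principle, and to track what it does to the Ap\'ery limits. First I would recall that for a subcanonically embedded Fano $X$ of index $r$ with hyperplane class $H$, the small $J$-function of $Y=\{s=0\}\subset X$ is obtained from that of $X$ by the standard quantum Lefschetz modification: the $J$-series of $Y$ equals (up to a mirror change of variables, trivial here since $\rk\Pic=1$) the series $\sum_n q^n\,(\text{class})\cdot\prod_{1\le m\le Hn}(H+m)$ divided by the corresponding series for $X$ — more precisely, the cohomology-valued solution $A_0^Y$ of the quantum differential equation of $Y$ is $(H\cup -)$-related to $A_0^X$ after inserting the hypergeometric factor $I_{H}(q)=\sum_n q^n \prod_{m=1}^{Hn}(H+m)/\prod(\dots)$. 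I would then observe that equation \eqref{maineq} for $X$ and for $Y$ differ only through this single classical modification along the one-dimensional Novikov direction, so that on the level of the scalar recursions attached to homogeneous primitive classes $\gamma$ of non-intermediate codimension, the recursion for $Y$ is the recursion for $X$ twisted by the same factor $I_H$ independently of $\gamma$ (the point being that $\gamma$ of non-intermediate codimension restricts to a genuine primitive class $\gamma\cap H$ of $Y$, by Hard Lefschetz as stated, and the Lefschetz/coprimitive decomposition is compatible with restriction off the intermediate degree).

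The key step is then purely about asymptotics of coefficients: if $a_\gamma^{Y,(n)}$ and $a_0^{Y,(n)}$ are obtained from $a_\gamma^{X,(n)}$ and $a_0^{X,(n)}$ by convolving with \emph{the same} sequence $c^{(n)}$ (the Taylor coefficients of $I_H$, which has radius of convergence $1$ and a single dominant singularity there), then
\[
\lim_{n\to\infty}\frac{a_\gamma^{Y,(n)}}{a_0^{Y,(n)}}=\lim_{n\to\infty}\frac{a_\gamma^{X,(n)}}{a_0^{X,(n)}}.
\]
Here I would use a Tauberian/transfer argument: both $A_0^X$ and $A_\gamma^X$ are solutions of a regular-singular ODE with the same dominant singularity $q=\rho_X$ on their circle of convergence, so $a_\gamma^{X,(n)}/a_0^{X,(n)}\to\mathrm{Apery}(\gamma)$ means $a_\gamma^{X,(n)}=\mathrm{Apery}(\gamma)\,a_0^{X,(n)}(1+o(1))$; convolving with $c^{(n)}$, whose generating function $I_H$ has its own dominant singularity at $q=1$, and noting $\rho_X\le 1$ while $\rho_Y=\min(\rho_X,1)$ governs $Y$, one checks that the convolution $a_0^{X}\ast c$ still has the $\mathrm{Apery}(\gamma)$-proportionality in the limit because $a_\gamma^X\ast c=\mathrm{Apery}(\gamma)\,(a_0^X\ast c)+((a_\gamma^X-\mathrm{Apery}(\gamma)a_0^X)\ast c)$ and the error term is a convolution of a sequence that is $o(a_0^{X,(n)})$ with a fixed sequence, hence $o(a_0^X\ast c)$ by a standard dominated-convolution estimate near the dominant singularity. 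Equivalently, one can phrase the whole thing as: $\mathrm{Apery}(\gamma)$ is read off from the ratio of connection coefficients at the (common to $X$ and $Y$) relevant singular point of the $D$-module, and quantum Lefschetz modifies the fundamental solution by a scalar hypergeometric factor that cancels in the ratio.

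The main obstacle I anticipate is making the "same convolution factor, independent of $\gamma$" claim rigorous at the level of the \emph{non-leading} solutions $A_\gamma$, rather than just for the $J$-function $A_0$: quantum Lefschetz is usually stated for the small $J$-function, and one must verify that the induced map on the full space of holomorphic solutions (equivalently, on coprimitive cohomology off the intermediate degree) is given by the \emph{same} operator $I_H(H)$ acting classically, so that passing to the $\gamma$-component and then to coefficient ratios really does produce an identical convolution for numerator and denominator. Once that compatibility is in hand — which is where the hypothesis of non-intermediate codimension and Hard Lefschetz (ensuring $\gamma\cap H$ is again primitive and that no intermediate "vanishing cycle" interference occurs) is used — the asymptotic cancellation is routine. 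A secondary, more technical point is the Tauberian step itself: one needs enough regularity (e.g. a genuine asymptotic expansion of $a_0^{X,(n)}$, available since the quantum $D$-module is of regular-singular type with known local exponents) to justify that $o(a_0^{X,(n)})\ast c^{(n)}=o\big((a_0^X\ast c)^{(n)}\big)$; this is standard singularity analysis, but worth stating carefully.
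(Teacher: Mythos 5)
Your proposal follows essentially the same route as the paper: apply the Givental--Kim--Gathmann quantum Lefschetz theorem to relate the solutions of the quantum differential equations of $X$ and $Y$, observe that the same hypergeometric modification applies to $A_\gamma$ as to $A_0$ (the paper likewise defers this verification to ``repeating the arguments of the original proof'' or the Frobenius method), and conclude that the limit of the ratio of coefficients is unchanged. One small correction: the quantum Lefschetz modification multiplies the $d$-th coefficient term-by-term by $\prod_{i=0}^{dH\cdot\beta}(H+i)$ rather than convolving the coefficient sequences, so the Tauberian/singularity-analysis machinery you invoke is heavier than needed --- the common factor cancels (essentially) coefficient-by-coefficient in the ratio.
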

\begin{proof}
By the quantum Lefschetz theorem of Givental-Kim-Gathmann we have a relation between the I-series
(solution of \ref{maineq} associated with $1 \in H^\bullet$) of $X$ and $Y$: e.g.
if $r>2$ and $\Pic(X) = \Z H$ and $H_2(X,\Z) = \Z \beta$
 then $d'th$ coefficient of $I-series$ of $X$
should be multiplied by $\prod_{i=0}^{d H \beta} (H+i)$, if $r\leq2$ one should also do a change
of coordinate.
One may show the similar relation between solutions of \ref{maineq} associated with $\gamma$
and $\gamma_{|_Y}$: either directly repeating the arguments of original proof, or by Frobenius
method of solving differential equation.
So the limit of the ratio is the same.
\end{proof}
One may rephrase the previous proposition in the following way
\begin{proposition} \label{ql2}
Ap\'ery class is functorial with respect to hyperplane sections.
\end{proposition}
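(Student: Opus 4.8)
The plan is to observe that Proposition \ref{ql2} is nothing more than a restatement of Proposition \ref{ql1} in the language of the characteristic class introduced in the introduction, so the proof consists of unwinding two definitions and checking they are compatible. First I would recall that for a sub-canonically embedded Fano $X$ of index $r>1$ and a general hyperplane section $Y$ of index $r-1$, Hard Lefschetz gives an injection on coprimitive cohomology away from the intermediate codimension: every coprimitive (equivalently, by the footnote's choice, primitive) class of $Y$ in non-intermediate codimension is the restriction $\gamma \cap H$ of a unique coprimitive class $\gamma$ of $X$. This is exactly the hypothesis under which Proposition \ref{ql1} applies, so for each such $\gamma$ we have the numerical equality $Apery_X(\gamma) = Apery_Y(\gamma \cap H)$.

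Next I would spell out what ``functorial with respect to hyperplane sections'' means for the Ap\'ery characteristic class $A(X) \in H^{\leq \dim X}(X,\C)$. By construction $A(X)$ is Poincar\'e-dual to the linear functional $Apery_X$ on coprimitive cohomology; the statement to prove is that under the Gysin/restriction pair associated to $i\colon Y \hookrightarrow X$, the class $A(Y)$ is obtained from $A(X)$ by the appropriate natural operation, namely $i^{*} A(X)$ (or, dually, that $Apery_Y \circ i^{*} = Apery_X$ on the relevant subspace of coprimitive classes). Concretely: pick a coprimitive class $\delta$ of $Y$ in non-intermediate codimension, lift it via Hard Lefschetz to $\gamma$ on $X$ with $i^{*}\gamma = \delta$, pair $A(Y)$ with $\delta$ to get $Apery_Y(\delta)$, and pair $A(X)$ with $\gamma$ to get $Apery_X(\gamma)$; Proposition \ref{ql1} says these agree, which is precisely the asserted compatibility of $A(X)$ and $A(Y)$ under restriction. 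For the intermediate codimension one simply notes that those classes are excluded from the statement (the restriction map need not preserve primitivity there), so there is nothing to check.

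The one genuinely substantive point I would flag, rather than the main obstacle, is bookkeeping: making sure the duality conventions (Poincar\'e vs.\ Lefschetz, as flagged in the introduction's footnote) and the degree shift $H^{\bullet}(X) \to H^{\bullet-2}(Y)$ induced by $\gamma \mapsto \gamma \cap H$ are tracked consistently so that ``functorial'' names the correct map, and that the normalization by $A_0$ (which is $I$-series data and which by the quantum Lefschetz theorem of Givental--Kim--Gathmann transforms in a controlled way, including the possible coordinate change when $r \leq 2$) is compatible between $X$ and $Y$ — but this compatibility of the denominators is already built into the proof of Proposition \ref{ql1}, where the same product factor $\prod_{i=0}^{dH\beta}(H+i)$ (and coordinate change) appears in numerator and denominator and cancels in the limit. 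So the proof reduces to: \emph{Proposition \ref{ql2} is the coordinate-free reformulation of Proposition \ref{ql1}, and follows from it by dualizing the Hard Lefschetz identification of non-intermediate coprimitive classes of $Y$ with those of $X$.} I expect the hardest part to be purely expository — stating precisely enough what the functor is on morphisms given by hyperplane sections — rather than any new mathematical input.
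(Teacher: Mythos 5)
Your reduction of Proposition~\ref{ql2} to Proposition~\ref{ql1} misses precisely the part of the statement that is \emph{not} contained in Proposition~\ref{ql1}. The paper itself flags this immediately after stating the proposition: ``Proposition~\ref{ql2} is slightly stronger then~\ref{ql1}\dots it states that `parasitic' intermediate primitive classes of $Y$ has Ap\'ery constant equal to $0$.'' By weak Lefschetz, the hyperplane section $Y$ acquires new primitive (coprimitive) classes in the intermediate codimension that are not restrictions of classes from $X$. The Ap\'ery class $A(Y)$ is by definition dual to the functional $Apery_Y$ on \emph{all} of the coprimitive cohomology of $Y$, so for the functoriality statement $A(Y)=i^{*}A(X)$ (in whatever duality convention) to hold, $Apery_Y$ must vanish on this new summand. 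Your sentence ``those classes are excluded from the statement \dots so there is nothing to check'' is exactly backwards: they are excluded from the hypotheses of Proposition~\ref{ql1}, but they are not excluded from the conclusion of Proposition~\ref{ql2}, and their vanishing is the additional content that has to be established (e.g.\ by an argument in the spirit of the quantum Lefschetz comparison, or by the quantum-minimality mechanism of Proposition~\ref{qmprop} in the relevant cases).

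The portion of your argument that handles the non-intermediate codimensions is fine and matches the paper's intent: Proposition~\ref{ql1} gives $Apery_X(\gamma)=Apery_Y(\gamma\cap H)$ for each non-intermediate primitive $\gamma$, and dualizing the Hard Lefschetz identification packages these equalities into compatibility of $A(X)$ and $A(Y)$ on the restricted part of cohomology. But as written your proof establishes only that restricted part, i.e.\ it re-proves Proposition~\ref{ql1} in coordinate-free language; the genuinely new assertion of Proposition~\ref{ql2} --- vanishing of the Ap\'ery constants of the parasitic middle-dimensional classes of $Y$ --- is left unaddressed.
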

Proposition \ref{ql2} is slightly stronger then \ref{ql1}:
indeed, the intermediate primitive classes of $X$ vanish restricted on $Y$,
but also it states that "parasitic" intermediate primitive classes of $Y$
has Ap\'ery constant equal to $0$.
Following notations of \cite{GoS} let's call all smooth varieties related to each other
by hyperplane section or deformation {\it a strain}, and if $Y$ is a hyperplane section of $X$
let's call $X$ {\it an unsection} of $Y$; if $Y$ has no unsections we call it
{\it a progenitor} of the strain.
The stability of Ap\'ery class is quite of the same nature as the stability of spectra
in the strain described in \cite{GoS}.
Propositions \ref{ql1} and \ref{ql2} suggest to consider some kind of stable Ap\'ery class
on the infinite hyperplane unsection. Such a stable framework of Gromov--Witten invariants
was constructed by Przyjalkowki for the case of quantum minimal Fano varieties in \cite{Prz},
using only Kontsevich-Manin axioms. The next proposition shows that
literally this construction gives nothing from our perspective 

\begin{proposition} \label{qmprop}
If a Fano manifold $X$ is quantum minimal then all Ap\'ery constants vanish i.e.
Ap\'ery class $A$ is equal to $1$.
\end{proposition}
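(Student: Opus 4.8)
The plan is to prove that for a quantum minimal $X$ the solution $A_\gamma$ attached to any coprimitive class $\gamma$ of positive codimension agrees with the solution of the \emph{classical} version of \ref{maineq} (quantum product replaced by cup product). As observed at the start of the paper, that classical solution is just the constant class $\gamma$; hence $a_\gamma^{(n)}=Pr_0(\gamma)=0$ for all $n$, so $Apery(\gamma)=0$, and the linear functional $Apery$ is therefore supported on $1\in H^0(X)$ alone. Dualizing, the Ap\'ery class $A(X)$ equals $1$.

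The only input specific to quantum minimality is this: every quantum correction annihilates each coprimitive $\gamma$ of positive codimension; i.e. writing $H\star=H\cup+\sum_{d\ge1}q^d B_d$, one has $B_d\gamma=0$ for all $d\ge1$. This is exactly what the ``stable'' Kontsevich--Manin reconstruction of \cite{Prz} produces once the cyclic $D$-submodule generated by $1$ has been discarded; and in every case to which the proposition is applied it is in any case forced by a cohomological degree count, since $q^d B_d$ raises cohomological degree by $2$ whereas $\deg q=2r$, so $B_d$ lowers degree by $2(dr-1)>0$, and a coprimitive class of positive codimension sits too low to be reached by any $B_d$. (If one reads ``quantum minimal'' simply as $\mu=1$, there is nothing to prove: $1$ is then the only coprimitive class and $Apery(1)=1$.)

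Granting this, I would feed $\gamma$ into the recursion attached to \ref{maineq}. The recursion reads $(n-H\cup)\phi_n=\sum_{d\ge1}B_d\phi_{n-d}$ for $n\ge1$ with $\phi_0=\gamma$, and $n-H\cup$ is invertible because $H\cup$ is nilpotent; since $B_d\gamma=0$, induction on $n$ gives $\phi_n=0$ for all $n\ge1$ (if $\phi_1=\dots=\phi_{n-1}=0$ then the right side is $B_n\gamma=0$). Thus $A_\gamma$ is the constant solution $\gamma$, so $A_\gamma^{(n)}=0$ for $n\ge1$ and also $Pr_0(\gamma)=0$ because $\gamma$ has positive codimension. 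Hence $a_\gamma^{(n)}=0$ for every $n$, while $a_0^{(n)}\ne0$ for $n\gg0$ since $A_0$ is the (genuinely non-constant) $J$-series of $X$; therefore $Apery(\gamma)=\lim_{n\to\infty}a_\gamma^{(n)}/a_0^{(n)}=0$. Letting $\gamma$ range over a basis of the coprimitive classes of positive codimension and using $Apery(1)=1$, we conclude $A(X)=1\in H^0(X)$.

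The delicate step is thus the second paragraph: showing that quantum minimality removes every quantum correction from the coprimitive classes of positive codimension. Everything past that point is the elementary recursion above, together with the fact (already recorded in the paper) that the classical quantum differential equation has only constant coprimitive solutions.
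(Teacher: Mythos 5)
Your main line of argument is essentially the paper's: quantum minimality forces the quantum corrections to vanish on everything outside the maximal Lefschetz block, so the solution attached to a coprimitive class $\gamma$ of positive codimension degenerates to the classical (constant) one, its $H^0$-component vanishes identically, and the Ap\'ery limit is $0$. The paper phrases the key step as ``the operator of quantum multiplication by $K_X$ restricted to the non-maximal Lefschetz blocks coincides with the cup product, hence is nilpotent, so $A_\gamma$ is polynomial in $q$''; your explicit recursion $(n-H\cup)\phi_n=\sum_d B_d\phi_{n-d}$ with $B_d\gamma=0$ is a spelled-out version of the same thing (and yields the marginally sharper conclusion that $A_\gamma$ is constant rather than merely polynomial). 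So the proof is the intended one, provided the input $B_d\gamma=0$ is taken, as in the paper, directly from the definition of quantum minimality (all primitive classes other than $1$ are quantum orthogonal to $\C[K_X]$).

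The one genuine problem is the parenthetical ``cohomological degree count'' you offer as an independent justification of $B_d\gamma=0$: it is wrong, and you should drop it. The admissible initial conditions of the recursion are the classes killed by $H\cup$, i.e.\ the top pieces of the Lefschetz blocks, which sit in high cohomological degree; for such a class the operator $B_d$, which lowers degree by $2(dr-1)$, has every chance of landing in nonnegative degree. Concretely, for $Gr(2,5)$ (index $r=5$) the second initial condition lives in $H^{8}$, $B_1$ drops degree by exactly $8$, and $B_1\gamma\in H^0$ is nonzero --- which is precisely why the Ap\'ery constant there is $\zeta(2)$ and not $0$. Any formal degree argument of the kind you sketch would apply verbatim to $Gr(2,5)$ and to every variety in the paper's tables, forcing all Ap\'ery constants to vanish, which they do not. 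The vanishing $B_d\gamma=0$ is not a degree phenomenon; it is exactly the content of the quantum minimality hypothesis, and that is the only place the hypothesis enters the proof.
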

\begin{proof}
It is a trivial consequence of the definition of quantum minimality ---
since all primitive classes except $1$ are quantum orthogonal
to $\C[K_X]$ the operator of quantum multiplication by $K_X$ restricted to
non-maximal Lefschetz blocks coincides with the cup-product, in particular it is
nilpotent, so the associated solutions $A_\gamma$ of quantum differential equation are polynomial in $q$ i.e.
their coefficients $a_\gamma^{(k)}$ vanish for $k>>0$, hence the Ap\'ery number is $0$.
\end{proof}
\begin{conjecture}
The converse to \ref{qmprop} statement is true as well.
\end{conjecture}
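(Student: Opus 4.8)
The plan is to prove the contrapositive: if $X$ is \emph{not} quantum minimal, then at least one Ap\'ery constant is nonzero. The starting point is the definition of quantum minimality used in the proof of \ref{qmprop}: $X$ is quantum minimal precisely when every primitive Lefschetz class $\gamma \neq 1$ is quantum-orthogonal to $\C[K_X]$, equivalently when the operator $K_X \star$ restricted to each non-maximal Lefschetz block agrees with the nilpotent cup-product operator $H \cup$. So the failure of quantum minimality means there is a primitive class $\gamma$ (of minimal codimension among those witnessing the failure) whose associated solution $A_\gamma$ of the quantum differential equation \ref{maineq} is \emph{not} polynomial in $q$: its coefficients $a_\gamma^{(k)}$ do not eventually vanish. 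First I would make this reduction precise and pass to such a minimal $\gamma$.

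The core of the argument is then asymptotic. By Givental's theorem the distinguished solution $A_0$ is the $J$-series, and the quantum differential equation $D\phi = H\star\phi$ has regular singularities with a well-understood exponential/monodromy structure at $q=\infty$ (Dubrovin's theory). The Ap\'ery constant $\mathrm{Apery}(\gamma) = \lim_{k\to\infty} a_\gamma^{(k)}/a_0^{(k)}$ is governed by the position of $A_\gamma$ relative to $A_0$ in the Jordan/growth filtration of the space of solutions at $q=\infty$: the limit is nonzero exactly when $A_\gamma$ has a nonzero component along the solution of maximal exponential growth rate (the one along $1\in H^0$, which drives $a_0^{(k)}$). The key step is to show that if $A_\gamma$ is not polynomial then it \emph{must} have such a component, i.e. its leading asymptotics are comparable to those of $A_0$ rather than strictly subdominant. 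For this I would use that $A_\gamma = Pr_0(\gamma + \sum_{n\ge1} A_\gamma^{(n)}q^n)$ is built by the matrix recursion from $H\star$, and analyze the recursion: the non-vanishing of $a_\gamma^{(k)}$ forces the ``feedback'' from the maximal Lefschetz block into the block of $\gamma$, which is exactly the quantum correction term that distinguishes $K_X\star$ from $H\cup$, and this term couples $A_\gamma$ to the dominant solution $A_0$ with a nonzero coefficient.

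The main obstacle I anticipate is making the last step rigorous in full generality: controlling the subleading asymptotics of solutions of a regular-singular ODE system with possibly non-semisimple monodromy at $q=\infty$, and ruling out ``accidental'' cancellations that could make $a_\gamma^{(k)}/a_0^{(k)} \to 0$ even when $A_\gamma$ is nonpolynomial. A clean way around this would be to show that, conversely, $\mathrm{Apery}(\gamma)=0$ together with polynomiality of all lower solutions forces $A_\gamma$ itself to be polynomial, by a downward induction on codimension using the linear recursion: if the limit vanishes then $a_\gamma^{(k)} = o(a_0^{(k)})$, and combined with the recursion relations and the already-polynomial lower solutions one deduces that the only consistent behavior is $a_\gamma^{(k)} = 0$ for $k\gg 0$, whence $K_X\star$ acts nilpotently on the block of $\gamma$, i.e. quantum minimality. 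I would also record the alternative strategy of reducing to progenitors via Proposition~\ref{ql1}: Ap\'ery class and (conjecturally) quantum minimality are both stable under hyperplane (un)section, so it may suffice to verify the equivalence on progenitors of strains, which could make the asymptotic analysis more tractable, though this stability claim for quantum minimality would itself need justification.
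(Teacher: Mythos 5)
The statement you are trying to prove is left as an open conjecture in the paper: the author gives no proof of the converse to Proposition~\ref{qmprop}, so there is nothing to compare your argument against except the paper's own data. Judged on its own terms, your proposal does not close the conjecture; it contains a genuine gap exactly where you suspect one. The central implication you need --- that if $A_\gamma$ is not polynomial in $q$ then $\mathrm{Apery}(\gamma)\neq 0$ --- is not established by anything in your sketch, and the paper's tables strongly suggest it is false at the level of individual primitive classes. Varieties such as $Gr(2,6)$, $D(4,2)$, $E(6,2)$ and $F(4,3)$ are not quantum minimal (they have nonzero Ap\'ery constants in some blocks), yet several of their primitive classes have Ap\'ery constant $0$; a solution of \ref{maineq} can carry genuine quantum corrections and still be strictly subdominant to $A_0$ at the relevant singularity (smaller growth rate, extra polynomial decay, or cancellation in the leading asymptotics), in which case $a_\gamma^{(k)}/a_0^{(k)}\to 0$. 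So your reduction to ``a minimal $\gamma$ witnessing the failure'' proves nothing unless you can show that \emph{some} witness couples to the dominant solution with nonzero coefficient, and that is precisely the content of the conjecture, not a lemma you can assume.

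Your two fallback strategies do not repair this. The downward induction from $\mathrm{Apery}(\gamma)=0$ to polynomiality of $A_\gamma$ asserts that $a_\gamma^{(k)}=o(a_0^{(k)})$ together with the recursion forces $a_\gamma^{(k)}=0$ for $k\gg 0$; no mechanism is offered for why a subdominant but nonzero tail is inconsistent with the recursion, and the counterexamples above (zero constants in non-minimal varieties) show that vanishing of a single Ap\'ery constant does not force polynomiality of that solution. The reduction to progenitors via Proposition~\ref{ql1} presupposes that quantum minimality is stable under hyperplane sections and unsections, which you acknowledge is unproven and which would anyway only relocate the problem. What your writeup does contribute is a correct identification of where the difficulty lives --- the monodromy/growth filtration of the solution space at $q=\infty$ and the possibility of accidental subdominance --- but identifying the obstacle is not the same as overcoming it, and the statement should remain a conjecture.
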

So for our purposes the framework of \cite{Prz} should be generalized taking into
account the structure of Lefschetz decomposition.
Another obstacle is geometrical nonliftability of varieties to higher dimensions --- one can show
both Grassmannian $Gr(2,5)$ (and any other Grassmannian except projective spaces and quadrics)
 and $OGr(5,10)$ are progenitors of their strains,
i.e. cannot be represented as a hyperplane section of any nonsingular manifold,
this follows e.g. from the fact that these varieties are self-dual,
but of course they are hyperplane sections of their cones.
\smallskip
We insist that the quantum recursions for the progenitors
$Gr(2,5)$ and $OGr(5,10)$ are the most natural in the strain,
in particular in both cases we consider two exact solutions of the recursion,
and in Ap\'ery's case one considers an almost solution with polynomial error term ---
because for the linear sections of dimension $\leq3$ ($\leq 5$) the second Lefschetz block
vanishes.
\bigskip 
One may ask a natural question whether any of the experimentally or theoretically calculated
Ap\'ery numbers (and their representations as the limits of the ratios of coefficients
of two solutions of the recurrence) may be proven to be irrational by Ap\'ery's argument.
At least we know it works in two cases of $Gr(2,5)$ and $OGr(5,10)$. 
Remind that for irrationality of $\alpha = \zeta(2)$ or $\alpha=\zeta(3)$ one shows 
that $(\alpha - \frac{a_\gamma}{q_n})$ is smaller then $\frac{1}{q_n}$, so we are interested
in the sign of $\lim \log(|\alpha - \frac{a_\gamma}{q_n}|) - \log(q_n)$
(or equivalently in the sign of 
\begin{equation} \label{convsp}
\lim \log \log(|\alpha - \frac{a_\gamma}{q_n}|) - \log \log q_n .
\end{equation}
There were many attempts to find any other recurencies with this sign being negative,
and most of them failed to the best of our knowledge. The quantum recursions we
considered in this article is not an exception (we calculated convergence speed \ref{convsp}
numerically for $n\geq 20$). For example the convergence speed for $\zeta(2)$
approximation from $Gr(2,N)$ decreases as $N$ grows, and is suitable only in the case of $Gr(2,5)$.
So we come to the question: what is so special about $Gr(2,5)$ and $OGr(5,10)$?
One immediately reminds the famous theorem of Ein (see e.g. \cite{Zak})
\begin{theorem} \label{einth}
Let $X \subset \P^N$ be a smooth non-degenerate irreducible $n$-dimensional
variety, such that $X$ has the same dimension as its projectively dual $X^*$.
Assume $N\geq \frac{3n}{2}$.
Then $X$ is either a hypersurface, or one of
\begin{enumerate} \label{sp1}
\item a Segre variety $\P^1 \times \P^r \subset \P^{2r+1}$
\item the Plucker embedding $Gr(2,5) \subset \P^9$
\item $OGr(5,10)$
\end{enumerate}
Three last cases are self-dual: $X \simeq X^*$.
\end{theorem}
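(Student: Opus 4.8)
The plan is to follow Ein's original argument, which reduces everything to the local geometry of a general contact locus. Write $m = N - n = \codim X$ and let $\delta$ denote the dual defect, so that $\dim X^{*} = N - 1 - \delta$. The hypothesis $\dim X^{*} = \dim X = n$ is then the statement $\delta = N - 1 - n = m - 1$, and the numerical assumption $N \ge \tfrac{3n}{2}$ translates into $n - \delta = 2n + 1 - N \le \tfrac{n}{2} + 1$; that is, the dual defect is ``large'', roughly half of $\dim X$. If $m = 1$ there is nothing to prove, since every smooth hypersurface of degree $\ge 2$ has a hypersurface as its projective dual, so one may assume $m \ge 2$. Over $\C$ the biduality theorem $(X^{*})^{*} = X$ holds, so one is free to pass back and forth between $X$ and $X^{*}$.

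First I would use the structure of a general tangent hyperplane. For general $\xi \in X^{*}$, with corresponding hyperplane $H_{\xi}$, the contact locus $L := \{ x \in X : \mathbb{T}_{x} X \subseteq H_{\xi} \}$ is a linear subspace of $\P^{N}$ of dimension exactly $\delta$; this is the standard consequence of biduality and of the fact that the Gauss map of $X^{*}$ collapses $L$ to the point $\xi$ (Ein; see \cite{Zak}). Fix a general $x \in L$ and consider the projective second fundamental form $|II_{x}| \subseteq |\O_{\P(T_{x}X)}(2)|$, a linear system of at most $m$ quadrics on $\P^{n-1}$. Maximality of the defect forces every member of $|II_{x}|$ to be singular along the linear space $\P(T_{x}L) \cong \P^{\delta-1}$, so that $|II_{x}|$ descends to a system of $m$ quadrics on the transverse space $\P^{\,n-\delta-1}$ with no common vertex --- a common singular point would enlarge the contact locus, contradicting that $\delta$ is the defect.

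Next I would classify these transverse quadrics. One now has $m$ quadrics on $\P^{\,n-\delta-1}$ with empty common vertex, while the first step shows $n - \delta$ is small compared with $m$. Such ``non-degenerate'' systems of quadrics are essentially rigid: one shows first that $n - \delta$ is even, equivalently $n \equiv \delta \pmod{2}$ --- concretely the normal bundle of the contact locus splits as $N_{L/X} \cong \O_{L}^{\oplus (n-\delta)/2} \oplus \O_{L}(1)^{\oplus (n-\delta)/2}$ --- and then that, up to $\PGL$, the transverse system admits only a very short list of normal forms (Clifford-type systems of quadrics linked to the real composition algebras). Feeding this back --- $X$ is swept out by the family of contact $\delta$-planes, and the deformation theory of $L \subset X$ together with the normal form of $II_{x}$ rigidifies the embedding --- should recover precisely $\P^{1} \times \P^{r} \subset \P^{2r+1}$, $Gr(2,5) \subset \P^{9}$ and $OGr(5,10) \subset \P^{15}$, alongside the degenerate alternative $m = 1$.

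The main obstacle is this last reconstruction step: propagating the infinitesimal rigidity of the transverse second fundamental form to a global identification of $X$ with one of the model varieties. This is the technical core of Ein's work, and it is exactly here that $N \ge \tfrac{3n}{2}$ is used --- it is what makes the transverse quadric system rigid rather than merely constrained, and the bound is sharp, since both $Gr(2,5)$ and $OGr(5,10)$ lie on the boundary $N = \tfrac{3n}{2}$. The self-duality of the three exceptional cases is then a direct verification: $\P^{1} \times \P^{r}$ is the variety of $2 \times (r+1)$ matrices of rank $\le 1$, and its dual is the same rank-$\le 1$ locus in the dual matrix space; $Gr(2,5)$, the Pfaffian locus of alternating $5 \times 5$ matrices of rank $\le 2$, is carried to the analogous locus for the dual form under the isomorphism $\wedge^{2}\C^{5} \cong \wedge^{3}(\C^{5})^{\vee}$; and the $10$-dimensional spinor variety $OGr(5,10) \subset \P^{15}$ is self-dual via the half-spin duality of $\mathrm{Spin}(10)$.
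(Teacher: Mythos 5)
This statement is not proven in the paper at all: it is quoted as ``the famous theorem of Ein'' with a pointer to \cite{Zak}, and is used only as motivation for singling out $Gr(2,5)$ and $OGr(5,10)$. So there is no internal proof to compare against; your proposal has to be judged as a free-standing reconstruction of Ein's argument (\emph{Varieties with small dual varieties I, II}).

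As such a reconstruction, the skeleton is right --- the translation $\delta=m-1$, the bound $n-\delta\le \frac{n}{2}+1$, the linearity of the general contact locus $L\cong\P^{\delta}$, the parity statement via the splitting $N_{L/X}\cong \O_L^{\oplus(n-\delta)/2}\oplus\O_L(1)^{\oplus(n-\delta)/2}$, and the analysis of the second fundamental form are exactly the ingredients of Ein's proof. But there are two problems. First, a local inaccuracy: it is not true that \emph{every} member of $|II_x|$ is singular along $\P(T_xL)$; only the quadric corresponding to the chosen conormal direction $\xi$ has $T_xL_\xi$ in its vertex, and different tangent hyperplanes at $x$ have different contact loci, so the ``descent to a transverse system of $m$ quadrics with no common vertex'' needs to be argued for the generic member rather than asserted for the whole system. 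Second, and more seriously, the entire classification --- passing from the normal form of the transverse quadric system to the global identification of $X$ with $\P^1\times\P^r$, $Gr(2,5)$ or $OGr(5,10)$ --- is flagged by you as ``the main obstacle'' and then skipped. That step is the actual content of Ein's theorem (in his treatment it occupies a second paper and uses the family of contact $\delta$-planes together with adjunction/Mori-theoretic arguments, not only infinitesimal rigidity of $II_x$), so the proposal is an accurate outline with a genuine gap at its core rather than a proof. The concluding verification of self-duality of the three models is fine.
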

\begin{remark}
For \ref{sp1} we have the coincidence of the coherent and topological cohomology
\begin{equation} \label{topcoh} 
N+1 = \dim H^0(X,\O(H)) = \dim H^\bullet(X)
\end{equation}
In all $3$ cases there are exactly two Lefschetz blocks, the codimensions of the grading
of second Lefschetz block are corr. $1$, $2$ and $3$.
\end{remark}
\begin{remark}
Ap\'ery number for $\P^1 \times \P^r$ should approximate some multiple
of $C$, but for $r=1,2,3$ it is $0$. As pointed out in section \ref{euler}
we haven't got any natural approximations for Euler constant in anti-canonical
Landau--Ginzburg model.
From the other point of view, the variety $\P^1 \times \P^r$ in the statement
of the theorem \ref{einth} is not (sub)anti-canonically embedded, but
embedded by the linear system $\O(1,1)$.
Calculations of \ref{euler} are what we expect to be
the quantum recursion for $X$ embedded by $\O(1,1)$,
they indeed approximate $C$, but the speed of convergence is too slow.
Either our guess is not correct (or not working here) or Landau--Ginzburg corresponding
to the linear system $|\O(1,1)|$ is something else.
\end{remark}
So the theorem \ref{einth} suggests the irrationality of Ap\'ery approximations are ruled by either
self-duality or extremal defectiveness of the progenitor. Varieties \ref{sp1}
are related by the famous construction: let $X$ be one of them, choose any point $p \in X$
(they are homogeneous so all points are equivalent),
then take an intersection of $X$ with its tangent space $Y = X \cap T_p X$. Then $Y$ is a cone
over the previous one:
\begin{gather}
T_p Gr(2,5) \cap Gr(2,5) = Cone(\P^1 \times \P^2) \\
T_p OGr(5,10) \cap OGr(5,10) = Cone(Gr(2,5))
\end{gather}

In that way $OGr(5,10)$ can be "lifted" one step further to Cartan variety $E(6,6)=E(6,1)$:
$$ T_p E(6,6) \cap E(6,6) = Cone(OGr(5,10)).$$
$E(6,6)$ is one of the four famous Severi varieties (or more general class of Scorza varieties)
classified by Fyodor Zak in \cite{Zak}:
\begin{theorem}
Let $X \subset \P^{N = \frac{3n+4}{2}}$ be $n$-dimensional Severi variety i.e.
$X$ can be isomorphically projected to $\P^{N-1}$.
Then $X$ is projectively equivalent to one of
\begin{enumerate}
\item the Veronese surface $v_2(\P^2) \subset \P^5$
\item the Segre fourfold $\P^2 \times \P^2 \subset \P^8$
\item the Grassmannian $Gr(2,6) \subset \P^{14}$
\item the Cartan variety $E(6,6) \subset \P^{26}$
\end{enumerate}
\end{theorem}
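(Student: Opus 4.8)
The plan is to reproduce Zak's line of argument: first turn the projectibility hypothesis into an extremal statement about the secant variety, then exploit Zak's theorem on tangencies to pin down the geometry of the entry loci, use that geometry to force the possible values of $n$ through a Hopf-invariant-one / Jordan-algebra dichotomy, and finally reconstruct $X$ in each case from the secant cubic. Throughout, $X=X^n\subset\P^N$ with $N=\frac{3n+4}{2}$, and ``isomorphically projectible to $\P^{N-1}$'' means exactly that the secant variety $\mathrm{Sec}(X)$ does not fill $\P^N$. Note that $n$ is automatically even, since $N$ must be an integer.

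\textbf{Step 1: Terracini and the secant defect.} First I would show $\dim\mathrm{Sec}(X)=N-1$ (this is where Zak's linear normality theorem, itself a consequence of the tangency theorem, enters, ruling out a more deeply deficient secant variety). Then Terracini's lemma gives, for general $x,y\in X$ and general $p\in\langle x,y\rangle$, that $\langle T_xX,T_yX\rangle=T_p\mathrm{Sec}(X)$, so $\dim(T_xX\cap T_yX)=2n-(N-1)=\tfrac n2-1$ and the secant defect is $\delta=(2n+1)-(N-1)=\tfrac n2$.

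\textbf{Step 2: the entry loci are smooth quadrics.} For general $p\in\mathrm{Sec}(X)$ consider the entry locus $\Sigma_p=\overline{\{x\in X : p\in\langle x,y\rangle\text{ for some }y\in X\}}$; a standard incidence-variety count gives $\dim\Sigma_p=\delta=\tfrac n2$. The key geometric input is that $T_p\mathrm{Sec}(X)$ is tangent to $X$ all along $\Sigma_p$, so Zak's theorem on tangencies bounds $\dim\Sigma_p\le\dim T_p\mathrm{Sec}(X)-n=\tfrac n2+1$, and a finer analysis of the tangent cone to $\mathrm{Sec}(X)$ at $p$ shows that $\Sigma_p$ is a \emph{smooth quadric hypersurface} $Q^{n/2}$ spanning a linear $\P^{n/2+1}\subset\P^N$. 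I expect this step — extracting the precise quadric structure of the entry locus from the tangency estimate — to be the main obstacle.

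\textbf{Step 3: the dimension restriction.} Fixing a general $x\in X$ and varying $p$, the family of entry-locus quadrics through $x$ endows the second fundamental form data at $x$ with the structure of a rank-three Jordan algebra; equivalently the parameter space carries a composition-algebra multiplication on $\R^{n/2}$. By Albert's classification of rank-three Jordan algebras (equivalently the Bott--Milnor--Kervaire / Adams theorem on real division algebras), the only possibilities are $\mathbb A\in\{\R,\C,\mathbb H,\mathbb O\}$, hence $n\in\{2,4,8,16\}$ and $N\in\{5,8,14,26\}$ respectively. Finally, using that the entry loci are quadrics one shows $\mathrm{Sec}(X)$ is a cubic hypersurface and that $X=\Sing(\mathrm{Sec}(X))$; then in each of the four cases $X$ is the singular locus of a Cartan ``determinant'' cubic of $3\times3$ $\mathbb A$-Hermitian matrices, which is precisely $v_2(\P^2)\subset\P^5$, $\P^2\times\P^2\subset\P^8$, $Gr(2,6)\subset\P^{14}$ and $E(6,6)\subset\P^{26}$. (For $n=2$ everything collapses to the classical fact that a smooth projectible surface in $\P^5$ is the Veronese surface, which can serve as a sanity check.)
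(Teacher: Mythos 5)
This statement is quoted in the paper as Zak's classification of Severi varieties and is cited to \cite{Zak} without any proof, so there is no internal argument to compare yours against; what you have written is an outline of Zak's own proof (in its modern Lazarsfeld--Van de Ven/Landsberg packaging). Your numerology is all correct: $n$ even, $\dim\mathrm{Sec}(X)=N-1$ via linear normality applied to a further projection, secant defect $\delta=n/2$ by Terracini, the tangency bound $\dim\Sigma_p\le\frac n2+1$, the dimensions $6,9,15,27$ of the spaces of $3\times3$ Hermitian matrices over $\R,\C,\mathbb H,\mathbb O$ matching $\P^5,\P^8,\P^{14},\P^{26}$, and the identification of $X$ with the singular locus of the determinantal cubic $\mathrm{Sec}(X)$.

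As a proof, however, the proposal has two genuine gaps, and they are exactly where the entire difficulty of Zak's theorem resides. First, Step 2: the tangency theorem only gives the upper bound $\dim\Sigma_p\le\frac n2+1$; it does not by itself produce the statement that $\Sigma_p$ is a \emph{smooth quadric} spanning a $\P^{n/2+1}$, and the ``finer analysis of the tangent cone'' you defer to is a substantial argument (one must show the entry locus is irreducible, lies in a linear space of dimension $\delta+1$, and is cut out there by a rank-$(\delta+2)$ quadric; smoothness of the general entry locus requires a separate monodromy/generic-smoothness argument). Second, and more seriously, Step 3: the passage from ``a family of $\delta$-dimensional quadrics on $X$ through a general point'' to ``a composition algebra on $\R^{n/2}$'' is asserted, not derived. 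Zak's original restriction $\delta\in\{1,2,4,8\}$ was obtained by a delicate combinatorial-geometric study of how the entry-locus quadrics through two general points intersect, not by invoking Adams or Albert; the Jordan-algebra reconstruction is a genuinely nontrivial reinterpretation that itself needs proof (one must exhibit the algebra structure on $H^0$ of the quadrics and verify the composition identity). Until those two steps are carried out, the argument is a roadmap rather than a proof --- which is a reasonable thing to produce for a theorem the paper itself only cites, but you should be explicit that the classification is being taken from Zak rather than reproved.
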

\begin{remark}
Apart from the first case that should be correctly interpreted
(e.g. taking symmetric square of $D$-module for $\P^2$), in the
other $3$ cases coincidence \ref{topcoh} holds
(this is general fact for the closures of highest weight orbits of algebraic groups).
The Lefschetz decompositions now consist of $3$ blocks --- first associated with $1$,
next one, and one block of length $1$ in intermediate codimension.
The last block has Ap\'ery number equal to $0$.
\end{remark}
Neither of Severi varieties provides us with a fast enough approximation, but the speeds
of convergence for them seem to be better then for arbitrary varieties.
So it may be possible that these speeds are related with the defect of the variety
(it is also supported by the fact that for Grassmannians defect decreases when $N$ grows). 

\bigskip
From the other perspective, when there are more then two Lefschetz blocks in the decomposition
one may try to use the simultaneous Ap\'ery-type approximations of a tuple of zeta-polynomials
as in the works of Zudilin. 

We would like to note that the recursion \ref{maineq} contains more then one approximation
of every Ap\'ery number appearing. Clearly speaking, in the definition
of Ap\'ery numbers we considered the limit of the ratios of fundamental terms i.e.
projections of two solutions $A_0$ and $A_\gamma$ to $H^0(X)$. It is natural to
ask if we get anything from considering the limits of ratios of the other coordinates.
Our experiments support the following 
\begin{conjecture}
$A_\gamma^{(k)}$ is approximately equal to $Apery(\gamma) \cdot A_0^{(k)}$ as $k\to\infty$.
\end{conjecture}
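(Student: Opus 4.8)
The plan is to locate the dominant singularity of the (multivalued, cohomology-valued) solutions $A_0(q)$ and $A_\gamma(q)$ on the boundary of their common disc of convergence, to show that near it both are asymptotically proportional to one and the same cohomology-valued singular solution of \ref{maineq}, and then to pin the constant of proportionality down to $Apery(\gamma)$ by comparing $H^0$-components with the scalar statement already in force. Concretely, write $A_\gamma^{(k)}=\frac{1}{2\pi i}\oint A_\gamma(q)\,q^{-k-1}\,dq$ over a small loop around $q=0$, and likewise for $A_0$. Since \ref{maineq} is regular singular at $q=0$ with unipotent local monodromy (there $H\star$ degenerates to the nilpotent cup-product by $H$), every holomorphic solution continues over $\C^\ast\setminus\Sigma$, with $\Sigma\subset\C^\ast$ the finite locus where the spectrum of $H\star$ degenerates; by the usual transfer theorems for linear ODEs the asymptotics of $A_\gamma^{(k)}$ as $k\to\infty$ are controlled by the point(s) $q_c\in\Sigma$ of least modulus and by the local behaviour of $A_\gamma$ there.

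The crucial input is that $q_c$ is a genuine \emph{conifold point}: its local monodromy is a pseudo-reflection, so the local solution space splits as a hyperplane of solutions holomorphic at $q_c$ plus a line $\C\cdot\vec g$ spanned by a single singular solution, and $[q^k]\vec g(q)\sim C\,q_c^{-k}k^{-s}(\log k)^t\,\vec v$ for a fixed $\vec v\in H^\bullet(X)$ and fixed scalars $C,s,t$. For $G/P$ this should follow either from the expected semisimplicity of quantum cohomology together with the conifold normal form of the non-generic fibres of the $D$-module, or --- in the range recorded in the tables --- from the explicit Peterson--Quantum-Chevalley description of $H\star$ used here; for $Gr(2,N)$ it is already built into the hypergeometric Wronskian picture of Section 2 and the sine formula \ref{sinsin}.

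Granting this, every global solution $\phi$ of \ref{maineq} decomposes uniquely as $\phi=c(\phi)\,\vec g+(\text{holomorphic at }q_c)$ with $c$ a linear functional, so $A_\gamma^{(k)}\sim c(A_\gamma)\,C\,q_c^{-k}k^{-s}(\log k)^t\,\vec v$ and $A_0^{(k)}\sim c(A_0)\,C\,q_c^{-k}k^{-s}(\log k)^t\,\vec v$. When the scalar Apéry limit is finite and nonzero one has $c(A_0)\ne0$ and $Pr_0\vec v\ne0$, so applying $Pr_0$ and dividing gives $c(A_\gamma)/c(A_0)=\lim_k a_\gamma^{(k)}/a_0^{(k)}=Apery(\gamma)$; dividing the vector asymptotics instead yields $A_\gamma^{(k)}/A_0^{(k)}\to Apery(\gamma)$ componentwise in $H^\bullet(X)$, which is the assertion (the case $Apery(\gamma)=0$, with $A_\gamma$ holomorphic at $q_c$, holds a fortiori and in particular recovers Proposition~\ref{qmprop}). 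If $\Sigma$ contains several points of modulus $|q_c|$ --- the mechanism behind the $\pm$ entries and the quadratic-character limits --- one runs the same argument at each and uses the symmetry permuting them (complex conjugation, or the Galois/character symmetry of the $D$-module) to check that all contributions point in the same cohomological direction.

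The main obstacle is the middle step: proving uniformly that the least-modulus finite singularity of the quantum $D$-module of $G/P$ is a true conifold point (one Jordan block) and that $A_0$ is not holomorphic there --- the latter being a Perron--Frobenius-type positivity for $H\star$ specialised at $q=q_c$ --- together with the analysis of equal-modulus singularities in the twisted cases. A conceptual proof seems to need a general statement on the monodromy of quantum cohomology $D$-modules of homogeneous spaces, in the spirit of Dubrovin's conjecture and of Golyshev's computation behind \ref{sinsin}; lacking that, one is thrown back on the case-by-case verification supplied by the Peterson--Chevalley data underlying the tables above.
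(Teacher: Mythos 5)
First, be aware that the paper does not prove this statement: it is offered as a conjecture supported by numerical experiment, together with the non-rigorous heuristic at the end of the same section (the one-step recursion $A^{(n)}=\frac{1}{n-M_0}M_1A^{(n-1)}$, the limit $N_l\to\Gamma(1+M_0)$, and the assumption that the largest eigenvalue $\alpha$ of $M_1$ is simple). Your strategy is morally the same as that heuristic --- both reduce the claim to saying that the large-$k$ asymptotics of every solution point in a single fixed cohomological direction --- so there is no clash of approach. But your analytic setup has a concrete flaw. In the normalization of \ref{maineq} the matrix of $H\star$ is polynomial in $q$, so the system is holomorphic on all of $\C^*$ and regular singular at $q=0$ with nilpotent residue (cup product by $H$); consequently every solution holomorphic near $0$ is \emph{entire} --- for $\P^{N-1}$ one gets $\sum_k q^k/(k!)^N$, and in general $a^{(k)}$ decays factorially, as the explicit factor $\frac{1}{l!}\alpha^l$ in the paper's own heuristic shows. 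There is no finite locus $\Sigma$ where solutions become singular, no common disc of convergence, and no dominant singularity $q_c$ to which a transfer theorem could be applied: the locus where the spectrum of $H\star_q$ degenerates is not a singularity of the ODE. The coefficient asymptotics are governed by the irregular singularity at $q=\infty$ (its Stokes data, i.e.\ the eigenvalues of $H\star$), or equivalently by the finite singularities of the Fourier--Laplace transform / regularized quantum $D$-module, where your conifold-point picture does become meaningful; as written, however, the middle of your argument rests on singularities that are not there.

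Second, even after transplanting the argument to the regularized $D$-module, the load-bearing steps remain assumptions, as you acknowledge: that the dominant singularity (equivalently, the largest eigenvalue of $H\star$ at $q=1$) is simple with one-dimensional singular part; that $c(A_0)\neq 0$ (a Perron--Frobenius-type positivity for the quantum recursion); and the case of several eigenvalues of equal modulus, which is precisely the mechanism behind the $\pm$ entries in the tables and cannot be dismissed by symmetry alone --- distinct equal-modulus contributions generically produce oscillating ratios, so the componentwise limit may fail to exist rather than merely require care. So the proposal is a reasonable programme, consistent with the paper's speculation and with the consequence \ref{qmprop} it correctly recovers, but it is not a proof, and the paper offers none either.
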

One may divide $A_\gamma^{(k)}$ by $A_0^{(k)}$ in the nilpotent ring of $H^\bullet(X)$ and state the
limit of such ratio exists and is equal to $Apery(\gamma) \in H^0(X)$.
For homogeneous $\gamma_2$ the ratio $\frac{(A_\gamma^{(k)},\gamma_2)}{(A_\gamma^{(k)},1)}$
grows as $k^{\codim \gamma_2}$ and the coordinates in the same Lefschetz block are linearly
dependent.

Finally let us provide some speculations explaining why the described behaviour is natural
and also why zeta-values should appear.
Assume for simplicity that 
the matrix of quantum multiplication by $H$ has degree $1$ in $q$ (it is often the case
for homogeneous varieties). Let $M_0$ be the operator of cup-product by $H$
and $M_1$ be the degree $1$ coefficient of quantum product by $H$.
Then the quantum recursion is one-step:
\begin{equation}
A^{(n)} = \frac{1}{n-M_0} M_1 A^{n-1} = \frac{1}{n} (1 + \frac{M_0}{n} + \frac{M_0^2}{n^2} + \dots) 
\cdot M_1 A^{n-1}
\end{equation}
Assume $M_0$ and $M_1$ commutes (actually, this is never true in our case).
Then 
$$A^{(l)} = 
\frac{1}{l!} \prod_{n=1}^l (1 + \frac{M_0}{n} + \frac{M_0^2}{n^2} + \dots) \cdot M_1^l A^{(0)}$$
Put

$$ N_l = \prod_{n=1}^l (1 + \frac{M_0}{n} + \frac{M_0^2}{n^2} + \dots) = 
\exp(\sum_{n=1}^l \sum_{k\geq 1} \frac{1}{k} \frac{M_0^k}{n^k}). $$ 

Up to normalization $\lim N_l$ is $\Gamma(1+M_0)$.
Assume further that largest (by absolute value) eigenvalue $\alpha$ of $M_1$ has the unique
eigenvector $\beta$ of multiplicity $1$.
Then $A^{(l)}$ is approximately equal to
$$C(A^{(0)},\beta) \cdot \frac{1}{l!} \cdot \alpha^l \cdot N_l \beta $$

Since $M_0$ and $M_1$ doesn't commute there are additional terms from the commutators of
$\Gamma(1 + M_0)$ and $M_1$.

\bigskip

{\bf Acknowledgement.} This preprint was written in late 2008.
The database \cite{GG2} of quantum cohomology of homogeneous varieties
was prepared jointly with V.\,Golyshev during our interest in the spectra
of Fano varieties (inspired by \cite{GG}), and the interest to the Ap\'ery constant appeared
after V.\,Golyshev's deresonance computation \cite{GoZ} of the first Ap\'ery constant for $Gr(2,N)$. 

Author thanks Duco van Straten for the fruitful discussions and the
invitation to the SFB/TR 45 program at Johannes Gutenberg Universit\"at.



\begin{thebibliography}{10}

\bibitem{ASZ}
G.~Almkvist, D.~van~Straten, W.~Zudilin,
{\it Ap\'ery Limits of Differential Equations of Order $4$ and $5$}

\bibitem{Apery}
R. Ap\'ery,
{\it Irrationalite de $\zeta(2)$ et $\zeta(3)$},
 Asterisque 61 (1979), 11--13

\bibitem{Beukers87}
F.~Beukers, \emph{{Irrationality proofs using modular forms.}}, {Journ\'ees
 arithm\'etiques, Besan\c{c}on/France 1985, Ast\'erisque 147/148, 271--283
 (1987).}, 1987.

\bibitem{FW04}
W.\,Fulton, C.\,Woodward,
{\it On the quantum product of Schubert classes},
J. Algebr. Geom. 13, No. 4 (2004), 641--661,
\href{http://arxiv.org/abs/math.AG/0112183}{arXiv:math.AG/0112183}.

\bibitem{GG}
S.\,Galkin, V.\,Golyshev,
{\it Quantum cohomology of Grassmannians and cyclotomic fields},
Russian Mathematical Surveys (2006), 61(1):171 

\bibitem{GG2}
S.\,Galkin, V.\,Golyshev,
{\it Quantum cohomology of homogeneous varieties and noncyclotomic fields:
PARI/GP database
with quantum multiplications by ample Picard generator 
in homogeneous varieties of types $A1-A7$, $B2-B7$, $C2-C7$, $D4-D7$, $E6$, $E7$, $F4$, $G2$.},
\url{http://www.mi.ras.ru/~galkin/work/u7.gp.bz2}

\bibitem{GoZ}
V.\,Golyshev,
{\it Rationality of Mukai varieties and Ap\'ery periods} (in Russian), a draft

\bibitem{GoS}
V.\,Golyshev,
{\it Spectra and strains}, \href{http://arxiv.org/abs/0801.0432}{arXiv:0801.0432}

\bibitem{Iritani}
H.Iritani,
{\it Real and integral structures in quantum cohomology I: toric orbifolds},
\href{http://arxiv.org/abs/0712.2204}{arXiv:0712.2204v2}

\bibitem{KKP}
L.Katzarkov, M.Kontsevich, T.Pantev,
{\it Hodge theoretic aspects of mirror symmetry},
\href{http://arxiv.org/abs/0806.0107}{arXiv:0806.0107} 

\bibitem{Lie} LiE computer algebra system.
\url{http://www-math.univ-poitiers.fr/~maavl/LiE}

\bibitem{PARI2}
 PARI/GP, version {\tt 2.3.3}, Bordeaux, 2008,
\url{http://pari.math.u-bordeaux.fr/}

\bibitem{Prz}
V. Przyjalkowski,
{\it Minimal Gromov--Witten ring}, \href{http://arxiv.org/abs/0710.4084}{arXiv:0710.4084}

\bibitem{Straten}
D. van Straten,
{\it a letter to Golyshev, 2007}

\bibitem{Zak}
F.\,L.\,Zak,
{\it Severi varieties},  Mathematics of the USSR-Sbornik (1986), 54(1):113,

\end{thebibliography}
\end{document}